% ----------------------------------------------------------------
% AMS-LaTeX Paper ************************************************
% **** -----------------------------------------------------------
\documentclass[english,11pt]{amsart}
\usepackage{amssymb,amsmath,amsthm}
\usepackage{srcltx}
\usepackage{babel,graphicx}
\usepackage{color}
% ----------------------------------------------------------------
\vfuzz2pt % Don't report over-full v-boxes if over-edge is small
\hfuzz2pt % Don't report over-full h-boxes if over-edge is small
% Never break inline mathematical formulas across lines
\binoppenalty=10000
\relpenalty=10000
\emergencystretch=3em
% THEOREMS -------------------------------------------------------
\newtheorem{theorem}{Theorem}[section]
\newtheorem{main}{Theorem}
\newtheorem{corollary}[theorem]{Corollary}
\newtheorem{lemma}[theorem]{Lemma}
\newtheorem{proposition}[theorem]{Proposition}

\theoremstyle{definition}
\newtheorem{definition}[theorem]{Definition}

\theoremstyle{remark}
\newtheorem{remark}[theorem]{Remark}
\newtheorem{example}[theorem]{Example}
\numberwithin{equation}{section}

\newcommand{\fin}{\mbox{}\hfill $\square$ \\[0.2cm]}

% MATH -----------------------------------------------------------

%\newcommand{\Real}{\bf R}
\newcommand{\eps}{\varepsilon}

\newcommand{\Z}{\mathbf{Z}}

\newcommand{\C}{\mathbf{C}}

\newcommand{\FF}{\mathcal{F}}

\newcommand{\dist}{{\rm dist}}

% --- additional macros for the merged version -------------------
\newcommand{\Aut}{\mathop{\rm Aut}}
\newcommand{\Fix}{\mathop{\rm Fix}}
\newcommand{\Stab}{\mathop{\rm Stab}}
\newcommand{\Sp}{\mathop{\rm Sp}}
\newcommand{\rsp}{\rho_{\rm sp}}
\newcommand{\dev}{\mathop{\rm dev}\nolimits}
\newcommand{\Norm}{\mathcal{N}}
\newcommand{\Sc}{\mathcal{S}}

\makeatletter
\@namedef{subjclassname@1991}{2020 Mathematics Subject Classification}
\makeatother

% ----------------------------------------------------------------
\begin{document}

\title[Geometric structures on Hopf manifolds]{Normal forms and geometric structures on Hopf manifolds}
\author{Paul Boureau}

\address{AGM - CNRS - CY Cergy Paris Universit\'e}
\email{paul.boureau@cyu.fr}

\keywords{Hopf manifold, normal form, Poincar\'e-Dulac, geometric structure, developing map, rigidity, uniqueness}

\begin{abstract}
We prove that every Hopf manifold of dimension $n\geq2$, primary or secondary, admits a holomorphic $(G,X)$-structure compatible with its complex structure, where $X=\C^n$ and $G$ is generated by the translations and the Guysinsky--Katok group of invertible sub-resonant polynomials. This extends to any dimension a result of B.~McKay and A.~Pokrovskiy, and rests on a self-contained treatment of Berteloot's approach to the Poincar\'e--Dulac normal form. We then study, in the body of the paper, to what extent the structure is unique: marked uniqueness fails, since diagonal Hopf manifolds already carry $n!$ pairwise inequivalent structures, but any two compatible structures differ by a global automorphism of $\C^n$, exactly one structure is aligned, and the compatible structures are classified by a coset space of the sub-resonant group.
\end{abstract}

\maketitle

\tableofcontents

\section{Introduction}\label{s:intro}

\subsection*{Hopf manifolds and the sub-resonant group}
Hopf manifolds are among the most studied compact complex manifolds carrying no K\"ahler metric. Following Kodaira \cite{Kodaira}, a Hopf manifold of dimension $n\geq2$ is a compact complex manifold whose universal cover is biholomorphic to $\C^n\setminus\{0\}$; it is primary when its fundamental group is infinite cyclic, and secondary otherwise. A primary Hopf manifold is a quotient $(\C^n\setminus\{0\})/\langle\gamma\rangle$ by a contraction $\gamma$, an automorphism of $\C^n\setminus\{0\}$ whose iterates converge to $0$ uniformly on compact sets; by Hartogs' theorem $\gamma$ extends to an automorphism of $\C^n$ fixing $0$, and it is a contraction exactly when the eigenvalues of $d_0\gamma$ lie in the punctured unit disc. These manifolds go back to H.~Hopf \cite{Hopf}, who introduced the first examples $(\C^n\setminus\{0\})/\langle z\mapsto 2z\rangle$; a secondary Hopf manifold is a finite quotient of a primary one, the possible finite groups being classified for surfaces by Kato \cite{Kato}. These manifolds have been intensively studied, notably as model spaces for locally conformally K\"ahler and Vaisman metrics; we refer to the recent survey of Istrati and Otiman \cite{IstratiOtiman} and to the monograph \cite{OrneaVerbitsky} for an account and further references.

By the Poincar\'e--Dulac theorem, revisited in Sections \ref{s:sr}--\ref{s:pd}, $\gamma$ is holomorphically conjugate to a polynomial map $h$. Guysinsky and Katok \cite{Katok} singled out the relevant class of polynomials: fixing an upper triangular representative $L$ of $d_0\gamma$, the sub-resonant polynomials relative to $L$ are those whose monomials satisfy prescribed inequalities among the moduli of the eigenvalues, and the invertible ones form a finite-dimensional algebraic group $SR^*(L)$ under composition, into which the normal form $h$ falls. We set
\[
X=\C^n,\qquad G:=\big\langle\, SR^*(L),\ \C^n \,\big\rangle \subset \Aut(\C^n),
\]
the group generated by $SR^*(L)$ and the translations of $\C^n$.

\subsection*{The main theorem}
Given a holomorphic homogeneous $G$-space $X$, a $(G,X)$-structure on a manifold is an atlas of $X$-valued charts whose transition maps belong to $G$. McKay and Pokrovskiy \cite{Mckay} constructed such structures on Hopf surfaces; Ornea and Verbitsky \cite{Verbi} obtained affine structures in the non-resonant case; and Madera \cite{Madera} recently built flat holomorphic Cartan geometries on every complex Hopf manifold by a different route, compared with ours at the end of this introduction. Our main result extends the existence to all dimensions and to all Hopf manifolds at once.

\begin{main}\label{main:existence}
Every Hopf manifold of dimension $n\geq2$ admits a holomorphic $(G,X)$-structure compatible with its complex structure.
\end{main}

The construction is uniform, but the two cases are handled separately in the body. For a primary Hopf manifold, the Poincar\'e--Dulac normalization identifies $M$ with $(\C^n\setminus\{0\})/\langle h\rangle$ (Section \ref{s:dynamics}), and the inclusion $\C^n\setminus\{0\}\hookrightarrow\C^n$ is a developing map with holonomy $h\in SR^*(L)\subset G$; we call the resulting structure the canonical structure. For a secondary manifold $(\C^n\setminus\{0\})/\Gamma$, the deck group $\Gamma$ is a finite extension of a cyclic contraction group $\langle h\rangle$, and we show in Section \ref{s:secondary} that $\Gamma$ lies entirely in $SR^*(L)$ --- because $h$ turns out to be central in $\Gamma$ --- so that the same inclusion is again a developing map.

\subsection*{Non-uniqueness}
The canonical structure is far from unique, and the failure is already visible in the simplest case. Let $M=(\C^n\setminus\{0\})/\langle L\rangle$ with $L=\mathrm{diag}(\lambda_1,\dots,\lambda_n)$, $0<|\lambda_1|<\dots<|\lambda_n|<1$. For each $\sigma\in\mathfrak{S}_n$ the permutation matrix $P_\sigma$ is a developing map, with holonomy $P_\sigma L P_\sigma^{-1}=\mathrm{diag}(\lambda_{\sigma^{-1}(1)},\dots,\lambda_{\sigma^{-1}(n)})$; this is again a diagonal matrix, hence lies in $SR^*(L)$, since the sub-resonance condition constrains the positions of the nonzero coefficients and not their values. The $n!$ resulting structures are pairwise inequivalent (Example \ref{ex:permutations} and Section \ref{s:classification}).

The body of the paper analyses this non-uniqueness for $n\geq2$. Section \ref{s:rigidity} shows that every developing map extends to a global automorphism of $\C^n$, so that two compatible structures differ by a global automorphism of the model. Sections \ref{s:flag}--\ref{s:uniqueness} single out a distinguished aligned structure and prove it to be the unique one of its kind, and Section \ref{s:classification} classifies all compatible structures by a coset space $SR^*(L)\backslash\Norm(h)$. In dimension one the situation is altogether different: a one-dimensional Hopf manifold is an elliptic curve, and it carries a one-parameter family of affine $(G,X)$-structures, so that no uniqueness holds (this is classical, see Gunning \cite{Gunning}); we therefore restrict to $n\geq2$ throughout.

A detailed comparison with the recent work of Madera \cite{Madera}, which reaches closely related results by a converse route, is given in Section \ref{s:madera} at the end of the paper.

 Section \ref{s:sr} presents sub-resonance type polynomials in a block formulation valid for an arbitrary invertible contracting linear part, and establishes the structural results used throughout (block triangular form, characterization of linear sub-resonant maps, degree bound). Section \ref{s:pd} contains the revisited proof of the Poincar\'e--Dulac theorem. Section \ref{s:model} constructs the model group $G$. Section \ref{s:dynamics} establishes the global dynamics of contracting sub-resonant maps, identifies the normalized model, and proves the existence theorem for primary manifolds. Section \ref{s:rigidity} proves the rigidity of developing maps (Proposition \ref{main:rigidity}). Section \ref{s:flag} introduces the holonomy flag and the alignment condition. Section \ref{s:uniqueness} proves the uniqueness of the aligned structure (Proposition \ref{main:aligned}). Section \ref{s:classification} classifies compatible structures (Proposition \ref{main:classification}) and shows that non-uniqueness persists modulo $\Aut(M)$. Section \ref{s:secondary} treats secondary Hopf manifolds and completes the proof of Theorem \ref{main:existence}. Finally, Section \ref{s:madera} compares our results and methods with the recent work of Madera.

Throughout, we have sought to keep the prerequisites to a minimum and to give the arguments in full detail: apart from the normalization procedure of Berteloot, the text is essentially self-contained, and we have favoured completeness of the proofs over brevity.

I sincerely thank Bertrand Deroin, who introduced me to this problem and was of great help through his exceptional availability. I also extend my heartfelt thanks to Matthieu Madera.

\section{Holomorphic $(G,X)-$structures}\label{s:gx}

We briefly review the notion of $(G,X)-$structure introduced by W.~Thurston \cite{Thurston}. Curious readers may refer to \cite{Goldman}.

\begin{definition}\label{d:gx}
Let $G$ be a complex Lie group acting transitively on a manifold $X$. A $(G,X)$-structure on a manifold $M$ is an atlas $\{(U_i, \phi_i)\}_{i \in I}$ with values in $X$ such that for every $i, j \in I$, the transition map $\phi_i \circ \phi_j^{-1}$ is the restriction of an element of $G$.
\end{definition}

\begin{remark}
If $G$ is a complex Lie group and the action of $G$ on $X$ is holomorphic, then any $(G,X)$-structure on a manifold $M$ naturally inherits a complex structure, as the transition maps are holomorphic. Such a $(G,X)$-structure will be called a holomorphic $(G,X)$-structure. If $M$ is already a complex manifold, we say that a holomorphic $(G,X)$-structure on $M$ is compatible if the complex structure it induces coincides with the given one; equivalently, if its charts are biholomorphisms onto their images.
\end{remark}

We now recall a general recipe that provides a canonical way to construct $(G,X)$-structures on quotient spaces under proper and free actions. It encompasses many classical examples, including the structures that will be constructed in this article, which are all of this type.

\begin{proposition} \label{p:structure quotient}
Let $G$ be a Lie group acting transitively on a manifold $X$. Let $U \subseteq X$ be an open subset and $\Gamma$ be a subgroup of $G$ such that the action of $\Gamma$ on $U$ is free and properly discontinuous. Then the quotient manifold $U/\Gamma$ inherits a $(G,X)$-structure.
\end{proposition}
\begin{example}
Consider the $1$-dimensional Hopf manifold defined by~$M = \C^* / \Gamma$, where $\Gamma = \langle \gamma \rangle$ is the cyclic group generated by the dilation $\gamma(z) = 2z$.

In this case, we set $X = \C$ and $G = \text{Aff}(\C)$, the group of complex affine transformations $z \mapsto az + b$. Let $U = \C^* \subset X$. The action of $\Gamma$ on $U$ is free and properly discontinuous. Since $\Gamma$ is a subgroup of $G$ and the action is holomorphic, the quotient $M$ inherits a complex $(G,X)$-structure from $X$ and is diffeomorphic to a real $2$-torus.
\end{example}

Proposition \ref{p:structure quotient} is the mechanism behind all the structures constructed in this paper: for a primary Hopf manifold we shall take $\Gamma=\langle h\rangle$ (Section \ref{s:dynamics}), and for a secondary one the full deck group $\Gamma$ of the primary cover (Section \ref{s:secondary}); in each case $\Gamma$ will be shown to sit inside $G$.

\subsection{Developing map, holonomy and equivalence}\label{ss:devhol}

Let $M$ be a manifold equipped with a holomorphic $(G,X)$-structure, and let $\pi:\widetilde M\to M$ be its universal cover. It is a classical fact (see \cite{Goldman,Thurston}) that the structure is encoded by a developing pair $(\dev,\rho)$: a local biholomorphism
\[
\dev:\widetilde M\longrightarrow X
\]
and a morphism $\rho:\pi_1(M)\to G$ satisfying the equivariance relation $\dev\circ\gamma=\rho(\gamma)\circ\dev$ for every deck transformation $\gamma\in\pi_1(M)$. Such a pair is not canonically attached to the structure: it is well defined only up to the action of $G$ by
\begin{equation}\label{eq:Gaction}
g\cdot(\dev,\rho):=(g\circ\dev,\ g\,\rho\,g^{-1}),\qquad g\in G,
\end{equation}
and conversely every equivariant pair arises from a $(G,X)$-structure. A $(G,X)$-structure is thus the same thing as a $G$-orbit of developing pairs; we call the pairs representatives of the structure, and we say a structure is compatible (with a given complex structure on $M$) when its developing map is a local biholomorphism for that complex structure, a property independent of the representative.

\begin{definition}[marked equivalence]\label{d:equivalence}
Two developing pairs $(\dev,\rho)$ and $(\dev',\rho')$ on the same complex manifold $M$ (with the same marking of $\pi_1(M)$) are equivalent if there exists $g_0\in G$ with
\[
\dev'=g_0\circ\dev\qquad\text{and}\qquad \rho'=g_0\,\rho\,g_0^{-1},
\]
that is, if they lie in the same $G$-orbit under \eqref{eq:Gaction}. A $(G,X)$-structure is an equivalence class for this relation. Throughout, we denote a structure by $\Sc$ and write $\Sc=[\dev,\rho]$ when $(\dev,\rho)$ is a representative.
\end{definition}

\begin{remark}\label{r:faithful}
In our situation the elements of $G$ are polynomial maps of $\C^n$ (Section \ref{s:model}); by the identity principle, an element of $G$ is determined by its restriction to any nonempty open subset of $\C^n$. The second condition in Definition \ref{d:equivalence} then follows from the first, since $\rho(\gamma)=\dev\circ\gamma\circ\dev^{-1}$ on a nonempty open set; a representative is therefore determined by its developing map alone.
\end{remark}

\section{Sub-resonance type polynomials} \label{s: srpoly}\label{s:sr}

In this section, we present the sub-resonance type polynomials introduced by Guysinsky and Katok in \cite{Katok}, in a block formulation valid for an arbitrary invertible contracting linear part.

Let $L \in M_n(\C)$ be an upper triangular invertible matrix whose eigenvalues $\lambda_1, \dots, \lambda_n$, counted with multiplicity, satisfy
\[
0 < |\lambda_1| \leq \dots \leq |\lambda_n| < 1.
\]
Let $\mu_1<\mu_2<\dots<\mu_l$ denote the distinct moduli of the eigenvalues, and for $1\le i\le l$ set
\[
E_i:=\bigoplus_{|\lambda|=\mu_i}E^{\lambda}(L),
\]
where $E^{\lambda}(L)$ is the characteristic space of $L$ associated with the eigenvalue $\lambda$, so that
\[
\C^n = E_1 \oplus \cdots \oplus E_l .
\]
We also introduce the modulus flag of $L$,
\[
V_\bullet:\qquad 0=V_0\subset V_1\subset\dots\subset V_l=\C^n,\qquad V_i:=E_1\oplus\dots\oplus E_i=\bigoplus_{|\lambda|\le\mu_i}E^\lambda(L),
\]
of signature $d(L):=(\dim V_1,\dots,\dim V_l)$. More generally, for any invertible matrix $B$ whose eigenvalues have moduli among $\{\mu_1,\dots,\mu_l\}$, we write $\FF(B)$ for its modulus flag, $\FF(B)_i:=\bigoplus_{|\lambda|\le\mu_i}E^\lambda(B)$.

We fix once and for all an adapted basis $(e_1,\dots,e_n)$ of $\C^n$: a Jordan basis of $L$ ordered by non-decreasing moduli. In this basis $L$ is upper triangular with diagonal entries $\lambda_1,\dots,\lambda_n$, and the basis is adapted to the decomposition $\C^n=E_1\oplus\dots\oplus E_l$. We write vectors $z=(t_1,\dots,t_l)$ in blocks according to this decomposition.

\begin{definition}\label{d:type}
Let $P: \C^n \to \C^n$ be a homogeneous polynomial map fixing~$0$. We say that it is of type $s = (s_1, \dots, s_l)$ if for all $a_1, \dots, a_l \in \C$ and $(t_1, \dots, t_l) \in E_1 \times \cdots \times E_l$, we have:
\[
P(a_1 t_1 + \cdots + a_l t_l) = a_1^{s_1} \cdots a_l^{s_l} P(t_1 + \cdots + t_l).
\]
\end{definition}

Since a polynomial map is a sum of homogeneous terms, we can introduce the following definition.

\begin{definition}\label{d:sr}
A polynomial map $P: \C^n \to \C^n$ fixing $0$ is said to be of sub-resonance type with respect to $L$ if each component $P_i: \C^n \to E_i$ consists only of homogeneous terms of type $s = (s_1, \dots, s_l)$ satisfying
\begin{equation}\label{eq:sr}
\ln \mu_i \leq \sum_j s_j \ln \mu_j.
\end{equation}
\end{definition}

The notions of homogeneous term and type are invariant under linear coordinate changes preserving the decomposition $\C^n=E_1\oplus\dots\oplus E_l$; it follows that the space of sub-resonance type polynomials does not depend on a coordinate system, but only on this decomposition and on the moduli $\mu_1,\dots,\mu_l$. We denote by $SR(L)$ the set of sub-resonance type polynomials with respect to $L$, and by $SR_k(L)$ the subspace of those that are homogeneous of degree $k$ (all types satisfying $\sum_j s_j=k$). When the moduli are pairwise distinct, $l=n$ and one recovers the definition of \cite{Katok}.

\begin{remark}[monomial criterion]\label{r:monomial}
In the adapted coordinates, write $z^I:=z_1^{i_1}\cdots z_n^{i_n}$ for a multi-index $I=(i_1,\dots,i_n)$, and $H_{I,j}:=z^Ie_j$. For $1\le k\le n$ let $b(k)\in\{1,\dots,l\}$ be the block of the coordinate $k$, so that $|\lambda_k|=\mu_{b(k)}$. The monomial map $H_{I,j}$ is homogeneous of type $s(I)$, where $s_b(I):=\sum_{k:\,b(k)=b} i_k$, and it is sub-resonant if and only if
\[
\ln\mu_{b(j)}\ \le\ \sum_{b}s_b(I)\ln\mu_b\ =\ \sum_{k} i_k\ln|\lambda_k| .
\]
\end{remark}

The following structural lemma will be used constantly.

\begin{lemma}[block triangular form]\label{l:blocktriangular}
Let $g\in SR(L)$, written $g=(g_1,\dots,g_l)$ in blocks. Then for every $i$,
\[
g_i(t_1,\dots,t_l)\ =\ C_i\,t_i\ +\ Q_i(t_{i+1},\dots,t_l),
\]
where $C_i\in\mathrm{End}(E_i)$ is linear and $Q_i$ is a polynomial map without constant term. Moreover every type $s$ occurring in $g$ satisfies
\[
\sum_j s_j\ \le\ \frac{\ln\mu_1}{\ln\mu_l}\ =\ \frac{\ln|\lambda_1|}{\ln|\lambda_n|}.
\]
\end{lemma}

\begin{proof}
Let $\beta$ be a term of $g_i$ of type $s=(s_1,\dots,s_l)\neq 0$. Since $\ln\mu_j<0$ for all $j$:

(a) $s_j=0$ for $j<i$. Otherwise $\sum_j s_j\ln\mu_j\le s_j\ln\mu_j\le\ln\mu_j<\ln\mu_i$, contradicting \eqref{eq:sr}.

(b) if $s_i\ge1$ then $s=e_i$. If $s_i\ge2$ then $\sum_j s_j\ln\mu_j\le 2\ln\mu_i<\ln\mu_i$; if $s_i=1$ and $s_j\ge1$ for some $j>i$ (indices $j<i$ being excluded by (a)), then $\sum_j s_j\ln\mu_j\le \ln\mu_i+\ln\mu_j<\ln\mu_i$. In both cases \eqref{eq:sr} is violated.

Thus $g_i$ is the sum of a term of type $e_i$, linear in $t_i$ and independent of the other variables, and of terms of types $s$ with $s_j=0$ for $j\le i$, i.e.\ depending only on $(t_{i+1},\dots,t_l)$.

For the degree bound: since $\ln\mu_j\le\ln\mu_l<0$,
\[
\ln\mu_1\le\ln\mu_i\le\sum_j s_j\ln\mu_j\le\Big(\sum_j s_j\Big)\ln\mu_l,
\]
whence $\sum_j s_j\le \ln\mu_1/\ln\mu_l$.
\end{proof}

We set
\[
c_0(L) := \left\lceil \frac{\ln |\lambda_1|}{\ln |\lambda_n|} \right\rceil .
\]

\begin{corollary}\label{c:degreebound}
Every element of $SR(L)$ has degree at most $c_0(L)$, and $SR_k(L)=\{0\}$ for $k>c_0(L)$.
\end{corollary}

\begin{corollary}\label{c:flag}
Let $g\in SR(L)$. Then:
\begin{enumerate}
\item for every $z\in\C^n$ and every $i$, $Dg(z)(V_i)\subseteq V_i$;
\item a linear map $A$ is sub-resonant with respect to $L$ if and only if $A(V_i)\subseteq V_i$ for every $i$. Consequently, the set of linear parts $\{Dg(0):g\in SR^*(L)\}$ is exactly the parabolic subgroup
\[
P(V_\bullet):=\{A\in GL_n(\C)\ :\ A(V_i)=V_i\ \ \forall i\},
\]
where $SR^*(L)$ denotes the set of elements of $SR(L)$ with invertible derivative at the origin.
\end{enumerate}
\end{corollary}

\begin{proof}
(1) By Lemma \ref{l:blocktriangular}, the Jacobian matrix of $g$ at any point is block upper triangular with respect to the decomposition $\C^n=\bigoplus_i E_i$: $\partial g_i/\partial t_j=0$ for $j<i$. Such a matrix maps $V_k=E_1\oplus\dots\oplus E_k$ into itself.

(2) The direct implication is (1) applied to $A$ (or Lemma \ref{l:blocktriangular} directly). Conversely, if $A(V_j)\subseteq V_j$ for all $j$, the component $A_i$ of $A$ only involves the variables $t_j$ with $j\ge i$; the corresponding types $e_j$, $j\ge i$, satisfy $\ln\mu_i\le\ln\mu_j$, hence \eqref{eq:sr}. Finally, any $A\in P(V_\bullet)$ is itself a (linear) element of $SR^*(L)$, and any linear part of an element of $SR^*(L)$ lies in $P(V_\bullet)$ by (1) together with invertibility and a dimension count.
\end{proof}

Guysinsky and Katok proved that $SR^*(L)$ is a finite-dimensional algebraic group under composition.

\begin{theorem}[M. Guysinsky and A. Katok]\label{katoklie}
$SR^*(L)$ forms a finite-dimensional algebraic group under composition.
\end{theorem}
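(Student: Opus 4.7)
The plan is to verify successively the defining properties of an algebraic group: finite-dimensionality, closure under composition, existence of two-sided inverses inside $SR^*(L)$, and algebraicity of the operations. By Lemma \ref{l:degsr}(2), every homogeneous sub-resonant term has degree at most $d:=\lfloor \ln|\lambda_1|/\ln|\lambda_l|\rfloor$, so $SR(L)$ embeds linearly into the finite-dimensional space of polynomial self-maps of $\C^n$ of degree at most $d$, and $SR^*(L)=\{P\in SR(L):\det dP(0)\neq 0\}$ is a Zariski open subset, realised as an affine variety in the standard way by adjoining a coordinate $t$ with $t\det dP(0)=1$. The identity map is manifestly an element.

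For closure under composition, my approach is a direct weight-counting calculation. If $P_i^{\alpha}$ is a homogeneous component of $P_i$ of type $s=(s_1,\dots,s_l)$ and the $Q_j^{\beta}$ are the homogeneous components of $Q_j$ of types $t^{j,\beta}$, expanding $P_i^\alpha(Q_1+\dots+Q_l)$ via the defining multihomogeneity produces summands of type $u=\sum_{j,r} t^{j,\beta_j^{(r)}}$, where $r$ runs over the $s_j$ copies of the $j$-th argument. Because every $\ln|\lambda_j|$ is negative and every $s_j$ is nonnegative, the sub-resonance inequalities for each $Q_j^\beta$ and for $P_i^\alpha$ combine as
\[
\sum_k u_k \ln|\lambda_k| = \sum_{j,r}\sum_k t^{j,\beta_j^{(r)}}_k \ln|\lambda_k| \geq \sum_j s_j \ln|\lambda_j| \geq \ln|\lambda_i|,
\]
which is exactly the sub-resonance relation needed for $(P\circ Q)_i$.

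For inversion, the idea is to build $P^{-1}$ degree-by-degree inside $SR(L)$. Sub-resonance of the linear part $A:=dP(0)$ forces it to be block upper-triangular with respect to the eigenspace filtration, so $A^{-1}$ is too and lies in $SR(L)$. Assuming inductively that a sub-resonant partial inverse $Q_{\leq k-1}=Q_1+\dots+Q_{k-1}$ has been produced, the equation $P(Q_{\leq k})=\mathrm{Id}+O(|z|^{k+1})$ forces $Q_k=-A^{-1}S_k$ with $S_k$ the degree-$k$ homogeneous part of $P\circ Q_{\leq k-1}$. Closure under composition implies $S_k$ is sub-resonant, and composing with $A^{-1}\in SR(L)$ preserves this. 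Crucially, for $k>d$ the degree bound of Lemma \ref{l:degsr}(2) leaves no nonzero sub-resonant homogeneous polynomial, so $Q_k=0$ and the induction terminates; thus $Q\in SR(L)$, and since $dQ(0)=A^{-1}$ is invertible, $Q\in SR^*(L)$. Algebraicity then follows: composition is polynomial in the coefficients, and the inductive formula for $Q_k$ makes inversion a morphism on the affine model of $SR^*(L)$.

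The principal obstacle, as I see it, is precisely this inversion step: one must argue that the formal inverse of $P$ actually lands in $SR(L)$ and is not a genuine power series of unbounded degree. The mechanism is the interplay of the two preceding ingredients: composition closure guarantees that each successive Taylor coefficient of the candidate inverse is sub-resonant, and the universal degree cap from Lemma \ref{l:degsr}(2) then shuts the process down at degree $d$.
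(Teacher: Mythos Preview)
Your proof is correct and follows essentially the same approach as the paper: the weight-counting argument for closure under composition is virtually identical, and the inversion step rests on the same two ingredients (composition closure plus the degree bound from Lemma~\ref{l:degsr}) to force the procedure to terminate. The only cosmetic difference is that the paper builds $F^{-1}$ as a composition $P_1\circ\cdots\circ P_m$ of correcting factors of the form $\mathrm{Id}-S_k$, whereas you compute the Taylor coefficients $Q_k=-A^{-1}S_k$ of the inverse directly; both are standard packagings of the same iterative idea.
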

\begin{proof}
Let us first show that this set is closed under composition. Let $F = (F_1, ..., F_l)$ and $G = (G_1, ..., G_l)$ be two sub-resonant polynomials. We have $F \circ G = (F_1 \circ G, ..., F_l \circ G)$, and for $1 \leq i \leq l$, we must show that the types of the homogeneous terms of $F_i \circ G$ are sub-resonant. Let $Q_i$ be a homogeneous term of $F_i$. There exists $(s_1, ..., s_l)$ satisfying $\ln\mu_i \leq \displaystyle\sum_j s_j\ln\mu_j$ such that for all $a_1, ..., a_l \in \C$ and $(t_1, ..., t_l) \in E_1 \times \cdots \times E_l$,
$$
Q_i(a_1t_1 + ... + a_lt_l) = a_1^{s_1}...a_l^{s_l}Q_i(t_1+...+t_l).
$$
Now let $R_j$ be a homogeneous term of $G_j$ for all $1 \leq j \leq l$. Then there exists $(s_1^{j}, ..., s_l^{j})$ satisfying $\ln\mu_j \leq \displaystyle\sum_k s_k^j\ln\mu_k$ such that for all $a_1, ..., a_l \in \C$ and $(t_1, ..., t_l) \in E_1 \times \cdots \times E_l$,
$$
R_j(a_1t_1 + ... + a_lt_l) = a_1^{s_1^{j}}...a_l^{s_l^{j}}R_j(t_1+...+t_l).
$$
Thus, for $a_1, ..., a_l \in \C$ and $(t_1, ..., t_l) \in E_1 \times \cdots \times E_l$, we have:
\begin{align*}
    Q_i \left( \sum_j R_j(a_1t_1 + ... + a_lt_l)\right) &= Q_i \left( \sum_j a_1^{s_1^{j}}\cdots a_l^{s_l^{j}}R_j(t_1+...+t_l)\right) \\
    &= \left(a_1^{s_1^{1}}\cdots a_l^{s_l^{1}}\right)^{s_1} \cdots \left(a_1^{s_1^{l}}\cdots a_l^{s_l^{l}}\right)^{s_l}Q_i\left( \sum_j R_j(t_1+...+t_l) \right) \\
    &= \left(a_1^{s_1s_1^{1}}\cdots a_l^{s_1s_l^{1}}\right) \cdots \left(a_1^{s_ls_1^{l}}\cdots a_l^{s_ls_l^{l}}\right)Q_i\left( \sum_j R_j(t_1+...+t_l) \right).
\end{align*}
It remains to verify that
$$
\ln\mu_i \leq \sum_k \sum_j s_k^j s_j \ln\mu_k.
$$
We have $\displaystyle\sum_k \sum_j s_k^j s_j \ln\mu_k = \sum_j \sum_k s_k^j s_j \ln\mu_k$, and $\ln\mu_j \leq \displaystyle\sum_k s_k^j\ln\mu_k$ for all $j$ because $R_j$ is sub-resonant. Thus:
$$
\sum_k \sum_j s_k^j s_j \ln\mu_k \geq \sum_j s_j\ln\mu_j.
$$
The result follows from the sub-resonant nature of $Q_i$. Therefore, $SR^*(L)$ is closed under composition. Since this operation is polynomial in the coefficients, it is algebraic.

By Corollary \ref{c:flag}, the linear part of an element $F \in SR^*(L)$ preserves the flag $V_\bullet$, hence so does its inverse, which is therefore also a sub-resonant polynomial.

Now let us show that sub-resonant polynomials relative to $L$ with invertible derivative at $0$ are invertible under composition and that their inverse is also sub-resonant. Let $F$ be such a map. By composing on the right with the inverse $P_1$ of the derivative of $F$ at $0$, we obtain a map $F_1 = Id + S_2 + S_{>2}$, where $S_2$ is a sum of homogeneous polynomials of degree $2$ and $S_{>2}$ is a sum of terms of strictly higher degrees. By construction, the sum of two sub-resonant polynomials relative to $L$ is a sub-resonant polynomial relative to $L$. Thus, the polynomial $P_2 := Id - S_2 \in SR^*(L)$. By composing $F_1$ with $P_2$ on the right, we obtain the polynomial $F_2 := F_1 \circ P_2 = Id - S_2 + S_2 + S_{>2} = Id + S_3 + S_{>3}$. According to Corollary \ref{c:degreebound}, the degree of the maps involved is bounded by $c_0(L)$. It follows that there exists an integer $m$ such that, continuing the previous construction, we have $F_m = Id$.

Thus, we have shown that $F$ is invertible and
$$
F^{-1} = P_1 \circ \cdots \circ P_m,
$$
where each $P_i \in SR^*(L)$. It follows that $F^{-1} \in SR^*(L)$. The operation associating a polynomial in $SR^*(L)$ to its inverse is polynomial in the coefficients, hence it is algebraic.

Finally, $SR^*(L)$ is indeed a finite-dimensional algebraic group, the dimension being finite by the bound on the degree of sub-resonant polynomials.

\end{proof}

\begin{remark}\label{r:noinvert}
The computation at the beginning of the proof of Theorem \ref{katoklie} does not use the invertibility of the derivatives: the space $SR(L)$ itself is stable under sums, under extraction of homogeneous components, and under composition. This will be used repeatedly in Sections \ref{s:dynamics}--\ref{s:classification}.
\end{remark}

\begin{corollary}\label{c:autpol}
Every element of $SR^*(L)$ is a polynomial automorphism of $\C^n$ fixing the origin, whose inverse is again a sub-resonant polynomial. The identities $F\circ F^{-1}=F^{-1}\circ F=\mathrm{id}$, being polynomial, therefore hold on all of $\C^n$.
\end{corollary}

\section{
A theorem of Poincar\'e and Dulac}\label{s:pd}

In this section, we provide a revisited proof of the well-known Poincar\'e--Dulac normal form theorem.
\begin{theorem}\label{t:PD0}
    Let $F : \C^n \to \C^n$ be a holomorphic map whose linear part at the origin $L$ is invertible and contracting. There exists a polynomial $P$ and a local biholomorphism $\phi$ such that $\phi(0) = 0$, $\phi'(0) = \text{Id}$, and $\phi^{-1} \circ F \circ \phi = P$.
\end{theorem}

We base our work on the proof by Berteloot in \cite{Berteloot}. Given a holomorphic transformation $F : B_r \to \C^n$, where $B_r$ denotes the ball of radius $r$ centered at the origin in $\C^n$, it can be expanded in the form $F = \sum_{p \geq 1} H_p$, where $H_p$ is a homogeneous polynomial map of degree $p$. We then formally construct a local biholomorphism by composing transformations designed to eliminate the term of a certain degree in this decomposition. This method requires studying the convergence of the formally constructed coordinate change.

In this section, all these convergence issues are resolved by the following proposition, derived from \cite{Berteloot}.
\begin{proposition}\label{p:conv}
Let $N$ be an automorphism of $\C^n$ fixing the origin, and let its linear part $L := N'(0)$ satisfy $a \|z\| \leq \|L(z)\| \leq A \|z\|$ for some $0 < a \leq A < 1$.

Let $F : B_r \to \C^n$ be a holomorphic map such that
\[
F = N + \sum_{p \geq k} H_p,
\]
where $H_p$ is a homogeneous polynomial map of degree $p$.

Then, if $k > \frac{\ln a}{\ln A}$, the sequence $(N^{-p} \circ F^p)_p$ converges, and its limit defines a local biholomorphism $\phi$ such that $\phi(0) = 0$, $\phi'(0) = \text{Id}$, and $\phi^{-1} \circ F \circ \phi = N$.
\end{proposition}

Consider a holomorphic map $F = L + \sum_{p \geq 2} H_p$, where $L$ is as in Section \ref{s:sr}. We work in the adapted coordinate system fixed there, in which $L$ is upper triangular.

\begin{definition}
We denote by $\mathcal{H}^p$ the $\C$-vector space of $p$-homogeneous polynomial maps from $\C^n$ to $\C^n$. We equip $\mathcal{H}^p$ with the canonical basis $\mathcal{B}^p := \{H_{I,j} = z^I e_j,\ |I| = p,\ 1 \leq j \leq n\}$ of Remark \ref{r:monomial}.
\end{definition}

\begin{definition}
Any holomorphic transformation of the form $\sum_{p \geq k+1} h_p$ where $h_p \in \mathcal{H}^p$ will also be denoted by $o(k)$.
\end{definition}

To apply Proposition \ref{p:conv}, we aim to reduce the problem to a map of the form $N + o(k)$, where $k$ satisfies the hypotheses of the proposition; any integer strictly greater than $c_0(L)$ will do, for a suitable choice of norm.

The proof then consists of cancelling the non-sub-resonant terms of degree greater than or equal to $2$ by successively conjugating $F$ with local biholomorphisms of the form $\psi_p := I + h_p$, where $h_p \in \mathcal{H}^p$ for $p = 2, \dots, c_0(L) + 1$. Let us first specify the effect of such a conjugation:

\begin{proposition}\label{p:cdv}
Let $\psi := I + h$, where $h \in \mathcal{H}^q$. Assume $F = L + S_{q-1} + H_q + o(q)$, where $S_{q-1} \in \mathcal{H}^2 \oplus \dots \oplus \mathcal{H}^{q-1}$ and $H_q \in \mathcal{H}^q$. Then
\[
\psi^{-1} \circ F \circ \psi = L + S_{q-1} + \big[ H_q + L \circ h - h \circ L \big] + o(q).
\]
\end{proposition}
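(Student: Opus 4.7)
The plan is to compute $\psi^{-1} \circ F \circ \psi$ by Taylor expansion, carefully tracking which terms land in degree $\leq q$ and which are absorbed into $o(q)$. The proof is essentially a bookkeeping exercise, and the only subtlety is justifying the formal inverse of $\psi$ and verifying that cross terms have degree strictly greater than $q$.

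First I would invert $\psi$. Writing $\psi^{-1} = I + g$ with $g$ a formal series vanishing at $0$, the identity $\psi \circ \psi^{-1} = I$ gives $g = -h \circ (I+g)$. Since $h \in \mathcal{H}^q$ with $q \geq 2$, the map $h \circ (I + g)$ agrees with $h$ up to terms of degree $\geq q+1$, so $\psi^{-1} = I - h + o(q)$.

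Next I would compute $F \circ \psi$. Applying each homogeneous piece $H_p$ to $z + h(z)$ and Taylor-expanding at $z$, every correction term is at least $DH_p(z) \cdot h(z)$, which is homogeneous of degree $(p-1) + q \geq q+1$, hence lies in $o(q)$. The only exception is the linear piece $L$, for which the ``correction'' $L \circ h$ is exact rather than negligible. Summing:
\[
F \circ \psi = L + L \circ h + S_{q-1} + H_q + o(q).
\]

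Then I would compose on the left with $\psi^{-1} = I - h + o(q)$. This contributes one new term, namely $-h$ applied to $F \circ \psi$. Again Taylor-expanding $h$ at $L(z)$, the first correction is $Dh(L(z)) \cdot (L \circ h + S_{q-1} + H_q + o(q))(z)$, and since $Dh$ is homogeneous of degree $q-1$ while the inner map starts in degree $\geq 2$, this correction is of degree $\geq q+1$. Therefore $h \circ (F \circ \psi) = h \circ L + o(q)$, and assembling the pieces yields
\[
\psi^{-1} \circ F \circ \psi = L + S_{q-1} + \bigl[H_q + L \circ h - h \circ L\bigr] + o(q),
\]
as desired.

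The main (and essentially only) obstacle is making the degree-counting rigorous: one must justify that Taylor expanding polynomials at $z + h(z)$ or at $L(z) + (\text{small})$ produces remainders controlled by $o(q)$. This follows from the fact that if $P \in \mathcal{H}^p$ and $v$ is a formal series of valuation $\geq m$, then $P(z+v(z)) - P(z)$ has valuation $\geq p-1+m$; applying this with $m \geq 2$ (and using that $h$ itself has valuation $q \geq 2$) handles every cross term in the computation.
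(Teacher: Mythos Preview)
Your proof is correct; the degree-counting via valuation is exactly the right mechanism, and you have handled both the inversion of $\psi$ and the two cross-term estimates cleanly. Note, however, that the paper does not actually supply a proof of this proposition: it is stated and immediately used to motivate the definition of the operators $M_L^q$, the computation being taken as standard. Your argument therefore fills a gap that the paper leaves to the reader, and it does so along the lines any reader would reconstruct --- formal inversion $\psi^{-1}=I-h+o(q)$, then two Taylor expansions with the key observation that every correction term picks up valuation at least $(p-1)+q\geq q+1$ except the linear one $L\circ h$.
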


The following family of operators appears:
\[
M_L^q :\ \left\{
\begin{array}{ccl}
\mathcal{H}^q & \longrightarrow & \mathcal{H}^q\\[3pt]
h & \longmapsto & h \circ L - L \circ h.
\end{array}
\right.
\]

The invertibility of $M_L^q$ in the previous proposition would allow us to conjugate $F$ into a holomorphic transformation without terms of degree $q$; in general only the non-sub-resonant part of degree $q$ can be cancelled, as we now explain.

The central idea for studying these operators is the following.

\begin{proposition}\label{p:triang}
$M_L^q$ is triangular on a reordering of $\mathcal{B}^q$.
\end{proposition}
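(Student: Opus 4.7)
The plan is to fix the basis $(e_1,\ldots,e_n)$ adapted to the characteristic-space decomposition, write $L=D+N$ with $D$ diagonal (eigenvalues on the diagonal) and $N$ strictly upper triangular, and compute $M_L^q(H_{I,j})$ directly in the canonical basis $\mathcal{B}^q$. Since $L(e_j)=\lambda_j e_j+\sum_{i<j}L_{ij}e_i$, one obtains immediately
\[
L\circ H_{I,j}=\lambda_j H_{I,j}+\sum_{i<j}L_{ij}\,H_{I,i}.
\]
For the other factor, each coordinate of $Lz$ reads $(Lz)_k=\lambda_k z_k+\sum_{m>k}L_{km}z_m$, and expanding the product $\prod_k (Lz)_k^{i_k}$ gives
\[
H_{I,j}\circ L=\lambda^I H_{I,j}+\sum_{I'\neq I}c_{I,I'}\,H_{I',j},
\]
where every $I'$ appearing in the sum is obtained from $I$ by a sequence of elementary transfers $i_k\mapsto i_k-1$, $i_m\mapsto i_m+1$ with $m>k$.

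Next I would introduce the weight $w(I):=\sum_k k\,i_k$ and the lexicographic order on $\{(I,j):|I|=q,\ 1\le j\le n\}$ defined by $(I,j)\prec(I',j')$ iff either $w(I)<w(I')$, or $w(I)=w(I')$ and $j>j'$. Each elementary transfer strictly increases $w$, so the $H_{I',j}$ appearing in the expansion of $H_{I,j}\circ L$ are indexed by pairs strictly $\succ(I,j)$; and the $H_{I,i}$ with $i<j$ coming from $L\circ H_{I,j}$ also satisfy $(I,j)\prec(I,i)$ since $w$ is unchanged while the second coordinate decreases. Subtracting,
\[
M_L^q(H_{I,j})=(\lambda^I-\lambda_j)\,H_{I,j}+\sum_{(I',j')\succ(I,j)}\gamma_{(I,j),(I',j')}\,H_{I',j'},
\]
which is precisely the claim that, after reordering $\mathcal{B}^q$ according to $\prec$, the matrix of $M_L^q$ is triangular with diagonal entries $\lambda^I-\lambda_j$.

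The only step requiring any care is the small combinatorial fact that every monomial other than $\lambda^I z^I$ in the expansion of $(Lz)^I$ has strictly larger weight $w$, which is a direct consequence of $N$ being strictly upper triangular. Once this is in place, triangularity is immediate, and the explicit diagonal entries $\lambda^I-\lambda_j$ are what will control the invertibility of $M_L^q$ needed to cancel the degree-$q$ terms of $F$.
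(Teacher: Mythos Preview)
Your argument is correct and follows the same route as the paper: a direct computation of $M_L^q(H_{I,j})$ exploiting that $L$ is upper triangular, producing the diagonal entry $\lambda^I-\lambda_j$ together with off-diagonal terms that move only in one direction under a suitable order on $\mathcal{B}^q$. The paper chooses the reverse-lexicographic order on multi-indices (compare $i_n$ first, then $i_{n-1}$, etc.) rather than your weight $w(I)=\sum_k k\,i_k$; note that your $\prec$ as written is only a strict preorder (distinct $I$ with the same weight and the same $j$ are incomparable), so strictly speaking you must refine it to a total order before ``reordering $\mathcal{B}^q$ according to $\prec$'', but this is harmless since every off-diagonal term you produce is already strictly $\succ(I,j)$.
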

\begin{proof}
Indeed, for $|I| = q$ and $1 \leq j \leq n$, writing $L=(l_{t,k})$ (upper triangular), we have:
\begin{align*}
    M_L^q(H_{I,j})(z_1, ...,z_n) &= (H_{I,j} \circ L)(z_1, ...,z_n) - (L\circ H_{I,j}) (z_1, ...,z_n) \\
    &= H_{I,j}\left( \sum_{t=1}^n \left(\sum_{k=t}^n l_{t,k}z_k\right)e_t\right)-z^I Le_j \\
    &= \left(\sum_{k=1}^n l_{1,k}z_k\right)^{i_1}\cdots\left(l_{n,n}z_n\right)^{i_n}e_j - \sum_{i=1}^{j}l_{i,j}H_{I,i}(z_1, ...,z_n) \\
    &= (\lambda^I - \lambda_j)H_{I,j}(z_1, ...,z_n) + \left(\sum_{I'} \alpha_{I'}z^{I'}\right)e_j - \sum_{i=1}^{j-1}l_{i,j}H_{I,i}(z_1, ...,z_n) \\
    &= (\lambda^I - \lambda_j)H_{I,j}(z_1, ...,z_n) + \sum_{I'} \alpha_{I'}H_{I',j}(z_1, ...,z_n)- \sum_{i=1}^{j-1}l_{i,j}H_{I,i}(z_1, ...,z_n)
\end{align*}

We define an order on the multi-indices such that $I' \ll I$ if $i_n' > i_n$, or if $i_n' = i_n$ and $i_{n-1}' > i_{n-1}$, and so on. This order ensures that all multi-indices appearing in the sum $\sum_{I'} \alpha_{I'} H_{I',j}(z_1, \dots, z_n)$ are strictly less than $I$. We naturally extend this order to the pairs $(I, j)$ by $(I', j') \ll (I, j)$ if $I' \ll I$, or $I' = I$ and $j' < j$.

We have thus shown that $M_L^q$ is upper triangular in this reordering of $\mathcal{B}^q$.
\end{proof}

These operators behave very well with respect to the space of sub-resonant polynomials relative to $L$. Let us first establish the following:

\begin{lemma}\label{l:stable&ante}
For any $r$:
\begin{enumerate}
    \item $SR_r(L)$ is stable under $M_L^r$.
    \item The pre-images of a polynomial in $SR_r(L)$ under $M_L^r$ are all sub-resonant.
\end{enumerate}
\end{lemma}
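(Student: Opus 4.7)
The plan is to treat the two parts separately: part~(1) reduces cleanly to Theorem~\ref{katoklie}, while part~(2) exploits the triangular structure from Proposition~\ref{p:triang} through a maximality argument. For part~(1), note that $L$ is itself sub-resonant relative to $L$: by definition of generalized eigenspaces, $L$ preserves every $E^{\lambda_i}(L)$, hence every filtration $\bigoplus_{|\lambda|<\mu} E^\lambda(L)$, which is exactly the linear criterion for sub-resonance recalled inside the proof of Theorem~\ref{katoklie}. Since $SR(L)$ is closed under composition by that same theorem, both $h \circ L$ and $L \circ h$ lie in $SR(L)$; being homogeneous of degree $r$ they lie in $SR_r(L)$, and thus so does their difference $M_L^r(h)$.

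For part~(2), let $g := M_L^r(h) \in SR_r(L)$ and decompose $h = h_{SR} + h_{NSR}$ by splitting its expansion in the basis $\mathcal{B}^r$ according to whether the multi-index $(I,j)$ is sub-resonant. By part~(1), $M_L^r(h_{SR}) \in SR_r(L)$, so $M_L^r(h_{NSR}) = g - M_L^r(h_{SR})$ also lies in $SR_r(L)$, and it suffices to prove $h_{NSR} = 0$. Assuming otherwise, pick the $\ll$-maximal index $(I_0, j_0)$ (necessarily non-sub-resonant, by definition of $h_{NSR}$) at which $h_{NSR}$ has a nonzero coefficient. The triangular formula of Proposition~\ref{p:triang} identifies the coefficient of $H_{I_0, j_0}$ in $M_L^r(h_{NSR})$ with
\[
(\lambda^{I_0} - \lambda_{j_0})\, h_{NSR,(I_0,j_0)} + \sum_{(I,j) \succ (I_0, j_0)} c_{(I,j)}\, h_{NSR,(I,j)},
\]
and the sum vanishes by maximality of $(I_0, j_0)$, since any larger index in the support of $h_{NSR}$ would also have to be non-sub-resonant, contradicting the choice. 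Non-sub-resonance at $(I_0, j_0)$ means $|\lambda_{j_0}| > |\lambda^{I_0}|$, so in particular $\lambda^{I_0} \neq \lambda_{j_0}$ and the remaining term is nonzero --- contradicting that $M_L^r(h_{NSR}) \in SR_r(L)$ must vanish at the non-sub-resonant index $(I_0, j_0)$.

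The delicate point to watch is exactly the last step: the maximality argument closes only because the diagonal coefficient $\lambda^{I_0} - \lambda_{j_0}$ does not vanish at a non-sub-resonant index, which is an immediate consequence of the strict inequality $|\lambda_{j_0}| > |\lambda^{I_0}|$ built into the definition of non-sub-resonance. Once this observation is in hand, the only ingredients required are the triangular form of $M_L^r$ from Proposition~\ref{p:triang} and closure of $SR(L)$ under composition from Theorem~\ref{katoklie}, both already available.
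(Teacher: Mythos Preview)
Your proof is correct and follows essentially the same route as the paper: part~(1) via closure of $SR(L)$ under composition (Theorem~\ref{katoklie}) together with $L\in SR(L)$, and part~(2) via the triangular form of $M_L^r$ (Proposition~\ref{p:triang}), a maximality argument on the $\ll$-order, and the observation that $\lambda^{I_0}\neq\lambda_{j_0}$ at any non-sub-resonant index. Your preliminary splitting $h=h_{SR}+h_{NSR}$ is a mild streamlining --- it lets you bypass the paper's case analysis (sub-resonant versus non-sub-resonant $(I,j)$ in the support of $h$) at the cost of invoking part~(1) once more at the outset --- but the core mechanism is identical.
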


\begin{proof}
(1) Fix $r$. Let $h$ be a polynomial of degree $r$ that is sub-resonant relative to $L$. It suffices to note that $M_L^r(h) = h \circ L - L \circ h$ is a sum of compositions of sub-resonant maps, hence sub-resonant by Theorem \ref{katoklie} and Remark \ref{r:noinvert} (stability of $SR(L)$ under composition and sums). We used the fact that $L$ is sub-resonant relative to itself, which follows from Corollary \ref{c:flag} since $L$ preserves $V_\bullet$.

(2) To show the second point, consider $h = \sum_{(I,j)} c_{I,j} H_{I,j}$, a polynomial that is not sub-resonant relative to $L$. Then, there necessarily exists $(I,j)$ such that $c_{I,j} \neq 0$ and $H_{I,j}$ is not sub-resonant relative to $L$. Let $(I_0, j_0)$ be maximal for the order of Proposition \ref{p:triang} satisfying the previous condition. We have:
\[
M_L^r(h) = c_{I_0, j_0} (\lambda^{I_0} - \lambda_{j_0}) H_{I_0, j_0} + \sum_{(I,j) \neq (I_0, j_0)} c_{I,j} M_L^r(H_{I,j}) + R,
\]
where
\[
R = c_{I_0, j_0} M_L^r(H_{I_0, j_0}) - c_{I_0, j_0} (\lambda^{I_0} - \lambda_{j_0}) H_{I_0, j_0}.
\]

We claim that $\lambda^{I_0} \neq \lambda_{j_0}$: otherwise, taking moduli, $\ln\mu_{b(j_0)}=\ln|\lambda_{j_0}|=\sum_k i_k\ln|\lambda_k|$, so that $H_{I_0,j_0}$ would be sub-resonant by the monomial criterion of Remark \ref{r:monomial}. Thus, the term $c_{I_0, j_0} (\lambda^{I_0} - \lambda_{j_0}) H_{I_0, j_0}$ is not sub-resonant relative to $L$. According to Proposition \ref{p:triang}, the remainder $R$ does not contain any component in $H_{I_0, j_0}$.

By Proposition \ref{p:triang} again, if $H_{I,j}$ is not sub-resonant, then $M_L^r(H_{I,j})$ does not have a component in $H_{I_0, j_0}$, due to the maximality of $(I_0, j_0)$. Finally, from the first point of the lemma, if $H_{I,j}$ is sub-resonant, then $M_L^r(H_{I,j})$ is also sub-resonant and hence has no component in $H_{I_0, j_0}$. Thus, $\sum_{(I,j) \neq (I_0,j_0)} c_{I,j} M_L^r(H_{I,j})$ has no component in $H_{I_0, j_0}$.

As a result, $M_L^r(h)$ has a non-zero component in $H_{I_0, j_0}$, and therefore $M_L^r(h)$ is not sub-resonant relative to $L$, which concludes the second point.

\end{proof}

It follows the
\begin{proposition}\label{p:surj}
For any $r$, we have
\[
SR_r(L) + \mathrm{Im}(M_L^r) = \mathcal{H}^r.
\]
\end{proposition}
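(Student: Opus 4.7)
The plan is to pass to the quotient $\mathcal{H}^r / SR_r(L)$ and deduce the decomposition from the injectivity of the induced operator, using dimension finiteness.

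First, part (1) of the previous lemma says that $M_L^r$ preserves $SR_r(L)$, so $M_L^r$ descends to a well-defined $\mathbb{C}$-linear endomorphism $\overline{M}_L^r$ of $\mathcal{H}^r / SR_r(L)$. Next, part (2) of the lemma says that any $h \in \mathcal{H}^r \setminus SR_r(L)$ satisfies $M_L^r(h) \notin SR_r(L)$; combined with (1), this means $(M_L^r)^{-1}(SR_r(L)) = SR_r(L)$, i.e. $M_L^r(h) \in SR_r(L)$ if and only if $h \in SR_r(L)$. That is exactly the injectivity of $\overline{M}_L^r$.

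Since $\mathcal{H}^r$ is finite dimensional, so is its quotient, and an injective linear endomorphism of a finite-dimensional space is automatically surjective. Unpacking surjectivity of $\overline{M}_L^r$: for every $h \in \mathcal{H}^r$ there exist $h' \in \mathcal{H}^r$ and $s \in SR_r(L)$ with $h = M_L^r(h') + s$, which is exactly the claim $SR_r(L) + \mathrm{Im}(M_L^r) = \mathcal{H}^r$.

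There is essentially no obstacle left here, since the two points of the previous lemma were designed to give both the stability and the ``non-sub-resonance is preserved'' statements needed to make the quotient argument work; the only thing to be a little careful about is to formulate part (2) as the statement that the preimage of $SR_r(L)$ is contained in $SR_r(L)$, so that together with (1) it becomes the equality $(M_L^r)^{-1}(SR_r(L)) = SR_r(L)$ on which the injectivity of $\overline{M}_L^r$ rests.
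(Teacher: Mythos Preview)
Your argument is correct. Passing to the quotient $\mathcal{H}^r/SR_r(L)$, using (1) for well-definedness and (2) for injectivity of the induced endomorphism, and then invoking finite-dimensionality to get surjectivity, is a clean and complete proof of the proposition.

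The paper reaches the same conclusion by a different route: it uses the generalized eigenspace decomposition of $M_L^r$. From part (2) of the lemma one first gets $\ker(M_L^r)\subseteq SR_r(L)$, and iterating (2) yields $E^0(M_L^r)\subseteq SR_r(L)$; on the other hand, every generalized eigenspace $E^\lambda(M_L^r)$ with $\lambda\neq 0$ lies in $\mathrm{Im}(M_L^r)$, since $M_L^r$ restricts to an isomorphism there. The Jordan decomposition $\mathcal{H}^r = E^0(M_L^r)\oplus\bigoplus_{\lambda\neq 0}E^\lambda(M_L^r)$ then gives the claim. Your quotient argument is more elementary in that it avoids any spectral decomposition and uses only the rank--nullity phenomenon on the quotient; the paper's argument, by contrast, leans on the triangular form of $M_L^r$ established earlier and makes the spectral picture explicit. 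Both extract the same content from the lemma, just packaged differently.
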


\begin{proof}
According to the previous lemma, since the zero polynomial is clearly sub-resonant, $\ker(M_L^r)$ is included in $SR_r(L)$. By iterating the lemma, we have that $E^0(M_L^r)$, the characteristic subspace of $M_L^r$ associated with the eigenvalue $0$, is included in $SR_r(L)$. It is clear that a characteristic subspace of an endomorphism associated with a non-zero eigenvalue is included in its image. Thus, using
\[
\mathcal{H}^r = E^0(M_L^r) \oplus \bigoplus_{\lambda \neq 0} E^\lambda(M_L^r),
\]
we obtain the proposition.
\end{proof}

We are now in a position to prove the following:

\begin{theorem}\label{t:PDSR}
Let $F : \C^n \to \C^n$ be a holomorphic map whose linear part at the origin $L$ is invertible and contracting. There exists a polynomial $h \in SR^*(L)$ with $h'(0)=L$ and a local biholomorphism $\phi$ such that $\phi(0) = 0$, $\phi'(0) = \mathrm{Id}$, and $\phi^{-1} \circ F \circ \phi = h$.
\end{theorem}
\begin{proof}
    We write $F = L + H_2 + o(2)$ where $H_2 \in \mathcal{H}^2$. According to Proposition \ref{p:surj}, there exist $h_2 \in SR_2(L)$ and $f_2 \in \mathcal{H}^2$ such that $h_2 + M_L^2(f_2) = H_2$. Applying Proposition \ref{p:cdv} to the local biholomorphism $\psi_2 = \mathrm{Id} + f_2$, we obtain:
\[
F_2 = \psi_2^{-1} \circ F \circ \psi_2 = L + h_2 + H_3 + o(3),
\]
where $H_3 \in \mathcal{H}^3$. Iterating this process, for every $3 \leq r \leq c_0(L) + 1$, there exists a polynomial $h_r \in SR_r(L)$ such that conjugation by the local biholomorphism $\psi_r = \mathrm{Id} + f_r$ given by Propositions \ref{p:surj} and \ref{p:cdv} yields:
\[
F_r = \psi_r^{-1} \circ F_{r-1} \circ \psi_r = L + h_2 + \dots + h_r + H_{r+1} + o(r+1).
\]
Note that $h_{c_0(L)+1}=0$ automatically, since $SR_{c_0(L)+1}(L)=\{0\}$ by Corollary \ref{c:degreebound}.

Set $h := L + h_2 + \dots + h_{c_0(L)}$. By construction $h$ is a sub-resonant polynomial relative to $L$ whose derivative at the origin is $L$, which is invertible; hence $h \in SR^*(L)$, and $h$ is an automorphism of $\C^n$ by Corollary \ref{c:autpol}. Moreover $F_{c_0(L)+1}=h+o(c_0(L)+1)$. Since $c_0(L)+2>\ln|\lambda_1|/\ln|\lambda_n|$, we may choose a norm on $\C^n$ and constants $0<a\le A<1$ with $a\|z\|\le\|Lz\|\le A\|z\|$ and $\ln a/\ln A<c_0(L)+2$. Proposition \ref{p:conv}, applied to $N=h$ and $k=c_0(L)+2$, provides a local biholomorphism tangent to the identity conjugating $F_{c_0(L)+1}$ to $h$; composing all the conjugations gives the desired $\phi$.
\end{proof}

\section{The model group}\label{s:model}

Throughout the rest of the paper, $M=(\C^n\setminus\{0\})/\langle\gamma\rangle$ is a primary Hopf manifold: $\gamma$ is a contraction, that is, an automorphism of $\C^n$ fixing $0$ whose iterates converge to $0$ uniformly on compact sets. Let $L$ denote an upper triangular representative of $d_0\gamma$, with eigenvalues $\lambda_1,\dots,\lambda_n$ as in Section \ref{s:sr}, and let $h\in SR^*(L)$ be the Poincar\'e--Dulac normal form of $\gamma$ provided by Theorem \ref{t:PDSR}. By Corollary \ref{c:autpol}, $h$ is a polynomial automorphism of $\C^n$; Section \ref{s:dynamics} identifies $M$ with the normalized model $(\C^n\setminus\{0\})/\langle h\rangle$.

Let us prove the following:

\begin{proposition}\label{p:Ggroup}
$G := \langle SR^*(L), \C^n \rangle$, where $\C^n$ acts by translation, is an algebraic group of finite dimension.
\end{proposition}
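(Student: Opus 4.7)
The natural strategy is to show that the set
\[
\tilde{H} := \{\tau_v \circ P : v \in \C^n,\ P \in SR^*(L)\}
\]
is already a group; since it manifestly contains both generators, this will yield $G = \tilde{H}$. Closure under composition reduces to the identity
\[
(\tau_v \circ P) \circ (\tau_w \circ Q) = \tau_{v + P(w)} \circ (P_w \circ Q), \qquad P_w(y) := P(y+w) - P(w),
\]
together with the claim that $P_w$ again lies in $SR^*(L)$. Closure under inversion is handled analogously via $(\tau_v \circ P)^{-1} = \tau_{P^{-1}(-v)} \circ (P^{-1})_{-v}$, applied to $P^{-1} \in SR^*(L)$ (which is granted by Theorem \ref{katoklie}).

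The crux is therefore the following \emph{translation lemma}: for every $P \in SR^*(L)$ and every $w \in \C^n$, the polynomial $P_w$ belongs to $SR^*(L)$. Decomposing $P$ on the canonical basis $H_{I,i} = z^I e_i$, Lemma \ref{l:degsr} says that each index $(I,i)$ appearing in $P$ satisfies $\sum_k i_k \ln|\lambda_k| \geq \ln|\lambda_i|$ together with $i_k = 0$ for $k < i$. Expanding
\[
(y+w)^I e_i = \sum_{0 \leq J \leq I} \binom{I}{J}\, w^{I-J}\, y^J e_i,
\]
every index $J \leq I$ still satisfies both conditions: indeed $j_k \leq i_k = 0$ for $k < i$, while $\ln|\lambda_k| \leq 0$ forces $\sum_k j_k \ln|\lambda_k| \geq \sum_k i_k \ln|\lambda_k| \geq \ln|\lambda_i|$. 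Hence $P(y+w)$ is, up to the constant $P(w)$, a sum of sub-resonant monomials in $y$, so $P_w \in SR(L)$. Finally, $P_w'(0) = P'(w)$ is invertible because $P$ is a polynomial automorphism of $\C^n$ (Theorem \ref{katoklie} again), and the Jacobian of such a map is invertible at every point.

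Granted the lemma, $P_w \circ Q \in SR^*(L)$ by stability under composition, so $\tilde{H}$ is closed under composition and, by the analogous computation, under inversion. Since every sub-resonant polynomial fixes the origin, the polynomial map
\[
\C^n \times SR^*(L) \longrightarrow G, \qquad (v,P) \longmapsto \tau_v \circ P,
\]
is a bijection — one recovers $v = (\tau_v \circ P)(0)$, then $P = \tau_{-v} \circ (\tau_v \circ P)$ — with inverse also polynomial in the coefficients. Consequently $G$ inherits the structure of a finite-dimensional algebraic group of dimension $n + \dim SR^*(L)$.

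The only genuinely non-routine point is the translation lemma: the miracle is that the sub-resonance inequalities survive passage from $I$ to any $J \leq I$, which rests only on the sign of $\ln|\lambda_k|$, negative because $L$ contracts. Everything else is algebraic bookkeeping patterned on the proof of Theorem \ref{katoklie}.
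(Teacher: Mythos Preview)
Your proof is correct and follows essentially the same approach as the paper's: both arguments reduce to the translation lemma $t_{-P(w)}\circ P\circ t_w\in SR^*(L)$, proved by expanding $(y+w)^I e_i$ and observing that every $J\leq I$ still satisfies the sub-resonance inequality because each $\ln|\lambda_k|$ is negative. Your treatment of the invertibility of $P_w'(0)=P'(w)$ via the polynomial-automorphism property of $P$ is slightly more explicit than the paper's, but otherwise the two proofs coincide.
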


\begin{proof}
For $\tau \in \C^n$, let $t_\tau$ denote the map $z \mapsto z + \tau$. We begin by proving
\begin{lemma}\label{l:Gform}
\[
G = \{t_\tau \circ h \mid \tau \in \C^n, h \in SR^*(L)\}.
\]
\end{lemma}
\begin{proof}
We need to verify that for every $(\tau, h) \in \C^n \times SR^*(L)$,
\[
t_{-h(\tau)} \circ h \circ t_\tau \in SR^*(L).
\]

The group structure of $SR^*(L)$ ensures the invertibility of the derivative at $0$. Let us demonstrate the sub-resonant nature of the result. To this end, note that for a fixed $\tau$, the map $h\mapsto t_{-h(\tau)}\circ h\circ t_\tau$, that is $h\mapsto h(\cdot+\tau)-h(\tau)$, is linear in $h$. Since in the adapted coordinates every element of $SR(L)$ is a linear combination of sub-resonant monomials $H_{I,j}$ (Remark \ref{r:monomial}), it suffices to verify the condition for such monomials.

We have:
\[
t_{-H_{I,j}(\tau)} \circ H_{I,j} \circ t_\tau (z)
= \big((z_1 + \tau_1)^{i_1} \cdots (z_n + \tau_n)^{i_n} - \tau_1^{i_1} \cdots \tau_n^{i_n}\big)\, e_j .
\]
Expanding, the homogeneous terms of $t_{-H_{I,j}(\tau)} \circ H_{I,j} \circ t_\tau$ are scalar multiples of the monomials $z^J e_j$ with $J\le I$ componentwise and $J\neq0$. Each of them is sub-resonant: by Remark \ref{r:monomial} and the sub-resonance of $H_{I,j}$,
\[
\ln \mu_{b(j)} \leq \sum_k i_k \ln |\lambda_k| \leq \sum_k j_k \ln |\lambda_k|,
\]
where the second inequality holds because $j_k\le i_k$ and $\ln|\lambda_k|<0$ for every $k$.
\end{proof}
We can now explicitly describe the group law of $G$. Let $(\tau_1, h_1)$ and $(\tau_2, h_2)$ be two elements of $G$. We have:
\[
(\tau_1, h_1) \cdot (\tau_2, h_2) = t_{\tau_1} \circ h_1 \circ t_{\tau_2} \circ h_2,
\]
\[
= t_{\tau_1} \circ t_{- \alpha} \circ t_\alpha \circ h_1 \circ t_{\tau_2} \circ h_2,
\]
\[
= t_{\tau_1 - \alpha} \circ h_3,
\]
\[
= (\tau_1 - \alpha, h_3),
\]
where $\alpha = -h_1(\tau_2)$ and $h_3 = t_\alpha \circ h_1 \circ t_{\tau_2} \circ h_2$. By the lemma and the group structure of $SR^*(L)$, $h_3 \in SR^*(L)$.

From the previous calculation, and since $(t_\tau\circ h)^{-1}=h^{-1}\circ t_{-\tau}$, we obtain:
\[
(\tau, h)^{-1} = \big(h^{-1}(-\tau),\ t_{-h^{-1}(-\tau)} \circ h^{-1} \circ t_{-\tau}\big).
\]

We observe that these relations are algebraic in $\tau_1$, $\tau_2$, $h_1$, and $h_2$.
\end{proof}

The following complement to Lemma \ref{l:Gform} will be used constantly in the study of uniqueness.

\begin{lemma}\label{l:uniquedecomp}
Every $g_0\in G$ can be written in a unique way as $g_0=t_\tau\circ s$ with $\tau\in\C^n$ and $s\in SR^*(L)$; moreover $\tau=g_0(0)$. Consequently:
\begin{enumerate}
\item $\Stab_G(0)=SR^*(L)$;
\item for every $g_0=t_\tau\circ s\in G$ and every $x\in\C^n$, $Dg_0(x)=Ds(x)$ preserves the flag $V_\bullet$.
\end{enumerate}
\end{lemma}

\begin{proof}
Existence is Lemma \ref{l:Gform}. For uniqueness: $s(0)=0$, so $(t_\tau\circ s)(0)=\tau$; hence $\tau=g_0(0)$ is determined, and then $s=t_{-\tau}\circ g_0$. If $g_0(0)=0$ then $\tau=0$ and $g_0=s\in SR^*(L)$, which proves (1). Point (2) follows from $Dt_\tau=\mathrm{id}$ and Corollary \ref{c:flag}, together with Corollary \ref{c:autpol} for the invertibility of $Ds(x)$ at every point.
\end{proof}

\section{Global dynamics, the normalized model, and existence}\label{s:dynamics}

In this section we establish the global dynamics of contracting sub-resonant maps, deduce that $\langle h\rangle$ acts freely and properly discontinuously on $\C^n\setminus\{0\}$, identify $M$ with the normalized model $(\C^n\setminus\{0\})/\langle h\rangle$, and prove the existence theorem (Theorem \ref{main:existence}) for primary Hopf manifolds.

\begin{lemma}[global dynamics]\label{l:dynamics}
Let $g\in SR(L)$ be such that the spectral radius of $L_g:=Dg(0)$ is $<1$. Then:
\begin{enumerate}
\item $g^{p}\to0$ uniformly on compact subsets of $\C^n$;
\item $\Fix(g)=\{0\}$.
\end{enumerate}
In particular these conclusions hold for every $g\in SR^*(L)$ whose linear part is similar to $L$ (for instance $g=h$), as well as for all its powers $g^{m}$, $m\ge1$.
\end{lemma}

\begin{proof}
Write $g$ in the block triangular form of Lemma \ref{l:blocktriangular}: $g_i=C_it_i+Q_i(t_{i+1},\dots,t_l)$ (with $Q_l=0$). The linear part $L_g$ is block upper triangular with diagonal blocks $C_1,\dots,C_l$ (the linear part of $Q_i$ only contributes to strictly upper blocks), so $\Sp(C_i)\subseteq\Sp(L_g)$ and the spectral radius of each $C_i$ is $<1$. Fix $\rho\in(\max_i\rsp(C_i),1)$; by the spectral radius formula there is $C\ge1$ with $\|C_i^{m}\|\le C\rho^{m}$ for all $i,m$.

Let $K\subset\C^n$ be compact; write $z^{(p)}:=g^{p}(z)$ and let us prove by descending induction on $i$ that $\delta_i(p):=\sup_{z\in K}\|t_i^{(p)}(z)\|\to0$.

Case $i=l$. Lemma \ref{l:blocktriangular} gives $t_l^{(p)}=C_l^{p}\,t_l^{(0)}$, and $\|C_l^{p}\|\to0$.

Induction step. Assume $\delta_j(p)\to0$ for all $j>i$. Since $Q_i$ is a polynomial map without constant term, there is $C'>0$ such that $\|Q_i(w)\|\le C'\|w\|$ for $\|w\|\le1$; the sequences $\delta_j(p)$, $j>i$, being bounded and tending to $0$, the quantity
\[
\eps_p:=\sup_{z\in K}\big\|Q_i\big(t_{i+1}^{(p)}(z),\dots,t_l^{(p)}(z)\big)\big\|
\]
is bounded and tends to $0$. The recursion $t_i^{(p+1)}=C_i\,t_i^{(p)}+Q_i\big(t^{(p)}_{>i}\big)$ solves to
\[
t_i^{(p)}=C_i^{p}\,t_i^{(0)}+\sum_{k=0}^{p-1}C_i^{\,p-1-k}\,Q_i\big(t^{(k)}_{>i}\big),
\qquad\text{whence}\qquad
\delta_i(p)\ \le\ C\rho^{p}\sup_K\|t_i\|\ +\ C\sum_{k=0}^{p-1}\rho^{\,p-1-k}\eps_k.
\]
It remains to check that the second term, the discrete convolution
\[
a_p:=C\sum_{k=0}^{p-1}\rho^{\,p-1-k}\eps_k
\]
of the geometric sequence $(\rho^k)$ with the bounded null sequence $(\eps_k)$, tends to $0$. Let $E:=\sup_k\eps_k<\infty$ and fix $\delta>0$; choose $k_0$ with $\eps_k\le\delta$ for all $k\ge k_0$. Splitting the sum at $k_0$, for $p>k_0$ we have
\[
a_p\ =\ C\sum_{k=0}^{k_0-1}\rho^{\,p-1-k}\eps_k\ +\ C\sum_{k=k_0}^{p-1}\rho^{\,p-1-k}\eps_k
\ \le\ C\,E\,k_0\,\rho^{\,p-k_0}\ +\ C\,\delta\sum_{j=0}^{\infty}\rho^{\,j}
\ =\ C\,E\,k_0\,\rho^{\,p-k_0}\ +\ \frac{C\,\delta}{1-\rho},
\]
where in the first sum we bounded each $\eps_k\le E$ and $\rho^{\,p-1-k}\le\rho^{\,p-k_0}$ (as $k\le k_0-1$), and in the second we bounded each $\eps_k\le\delta$ and extended the geometric sum to infinity. The first term tends to $0$ as $p\to\infty$ (since $\rho<1$), so $\limsup_p a_p\le C\delta/(1-\rho)$; as $\delta>0$ was arbitrary, $a_p\to0$. Therefore $\delta_i(p)\to0$, and by descending induction this proves (1).

(2) If $g(q)=q$ then $q=g^{p}(q)\to0$, so $q=0$.

Finally, if $g\in SR^*(L)$ has linear part similar to $L$, then $\rsp(L_g)=\rsp(L)<1$; and $g^{m}\in SR^*(L)$ (Theorem \ref{katoklie}) has linear part $L_g^{m}$, of spectral radius $<1$.
\end{proof}

\begin{corollary}\label{c:properlydiscontinuous}
Let $g\in SR^*(L)$ have linear part similar to $L$ (for instance $g=h$). Then $\langle g\rangle\simeq\Z$ acts freely and properly discontinuously on $\C^n\setminus\{0\}$.
\end{corollary}

\begin{proof}
Freeness. If $g^{m}(z)=z$ with $m\ge1$ and $z\neq0$, then $z\in\Fix(g^{m})=\{0\}$ by Lemma \ref{l:dynamics}, a contradiction; the case $m\le-1$ reduces to this one by applying $g^{-m}$. In particular, $g$ has infinite order.

Proper discontinuity. Let $K\subset\C^n\setminus\{0\}$ be compact and $d:=\dist(0,K)>0$. By Lemma \ref{l:dynamics}, $g^{m}(K)\subset B(0,d/2)$ for $m\ge m_0$, so $g^{m}(K)\cap K=\emptyset$ for $m\ge m_0$; and for $m\le-m_0$, $g^{m}(K)\cap K\neq\emptyset$ is equivalent to $K\cap g^{-m}(K)\neq\emptyset$, which is excluded in the same way. Hence $\{m\in\Z: g^{m}(K)\cap K\neq\emptyset\}$ is finite.
\end{proof}

\begin{proposition}[normalized model]\label{p:normalizedmodel}
There is a biholomorphism
\[
\Phi:\ (\C^n\setminus\{0\})/\langle\gamma\rangle\ \xrightarrow{\ \sim\ }\ (\C^n\setminus\{0\})/\langle h\rangle .
\]
In particular the generator of $\pi_1(M)\simeq\Z$ acts, in the normalized model, by $h$.
\end{proposition}

\begin{proof}
By Corollary \ref{c:properlydiscontinuous}, $M':=(\C^n\setminus\{0\})/\langle h\rangle$ is a complex manifold. Let $\varphi$ be the local conjugation of Theorem \ref{t:PDSR}, so that $\gamma\circ\varphi=\varphi\circ h$ near $0$. Equip $\C^n$ with an adapted norm in which $\|L\|<1$; since $Dh(0)=L$, we have $\|h(z)\|\le c\|z\|$ (with $c<1$) on a ball $B=B(0,r)$ small enough, so that $h(B)\subseteq B$, and we may choose $r$ so small that $\varphi$ is defined and injective on a neighborhood of $\overline B$. By induction, $\gamma^{k}\circ\varphi=\varphi\circ h^{k}$ on $B$ for every $k\ge0$ (the identity at step $k$ together with $h^{k}(B)\subseteq B$ yields step $k+1$).

Construction. Let $u\in \C^n\setminus\{0\}$. Since $\gamma$ is a contraction, $\gamma^{p}(u)\to0$; as $\varphi(B)$ is a neighborhood of $0$, there is an admissible $p\ge0$, i.e.\ with $\gamma^{p}(u)\in\varphi(B)$. Set
\[
\widetilde\Psi(u):=h^{-p}\big(\varphi^{-1}(\gamma^{p}(u))\big)\in\C^n
\]
($h$ is a global automorphism by Corollary \ref{c:autpol}, so $h^{-p}$ is everywhere defined).

Independence of $p$. If $p$ is admissible, set $x:=\varphi^{-1}(\gamma^{p}u)\in B$; then $\gamma^{p+1}u=\gamma(\varphi(x))=\varphi(h(x))$ with $h(x)\in B$: $p+1$ is admissible and
\[
h^{-(p+1)}\big(\varphi^{-1}(\gamma^{p+1}u)\big)=h^{-(p+1)}(h(x))=h^{-p}(x).
\]
Any two admissible integers are compared through the chain of intermediate ones (every integer larger than an admissible one is admissible), so $\widetilde\Psi$ is well defined.

Properties. Locally, one and the same $p$ is admissible on a neighborhood of $u$ (continuity, openness of $\varphi(B)$), so $\widetilde\Psi$ is holomorphic and, as a composition of local biholomorphisms, is a local biholomorphism. If $\widetilde\Psi(u)=0$ then $\varphi^{-1}(\gamma^{p}u)=h^{p}(0)=0$, hence $\gamma^{p}(u)=0$ and $u=0$ (injectivity of $\gamma$), a contradiction: $\widetilde\Psi$ takes values in $\C^n\setminus\{0\}$. Finally, if $p$ is admissible for $\gamma(u)$ then
\[
\widetilde\Psi(\gamma(u))=h^{-p}\varphi^{-1}(\gamma^{p+1}u)=h\cdot h^{-(p+1)}\varphi^{-1}(\gamma^{p+1}u)=h(\widetilde\Psi(u)) :
\]
$\widetilde\Psi$ is equivariant and descends to a holomorphic map $\Phi:M\to M'$, a local biholomorphism.

Injectivity. Suppose $\widetilde\Psi(u)=h^{m}\,\widetilde\Psi(u')$. Choose $p$ admissible for $u$ and $p'$ for $u'$, and set $x=\varphi^{-1}(\gamma^{p}u)$, $x'=\varphi^{-1}(\gamma^{p'}u')\in B$; the relation reads $x=h^{a}(x')$ with $a:=p-p'+m$. Up to exchanging $(u,p)$ and $(u',p')$, assume $a\ge0$; then $h^{a}(x')\in B$ and
\[
\gamma^{p}(u)=\varphi(h^{a}(x'))=\gamma^{a}(\varphi(x'))=\gamma^{a+p'}(u').
\]
The iterates of $\gamma$ being injective, we deduce that $u$ and $u'$ lie on the same $\gamma$-orbit, so $[u]=[u']$ in $M$. Hence $\Phi$ is injective.

Surjectivity. Let $z\in\C^n\setminus\{0\}$. By Lemma \ref{l:dynamics}, $h^{p}(z)\in B\setminus\{0\}$ for some $p\ge0$; set $u:=\varphi(h^{p}(z))\in \C^n\setminus\{0\}$. The integer $0$ is admissible for $u$ and $\widetilde\Psi(u)=\varphi^{-1}(u)=h^{p}(z)$, so $\Phi([u])=[z]$.

$\Phi$ is therefore a biholomorphism. For $n\ge2$, $\C^n\setminus\{0\}$ is simply connected, hence it is the universal cover of $M'\simeq M$, with deck group $\langle h\rangle\simeq\Z$ (Corollary \ref{c:properlydiscontinuous} gives the infinite order).
\end{proof}

From now on, we identify $M=(\C^n\setminus\{0\})/\langle h\rangle$ via Proposition \ref{p:normalizedmodel}, and for $n\geq 2$ we let $\gamma$ denote the generator of $\pi_1(M)$ acting by $h$. The canonical structure $\Sc_{\mathrm{can}}$ is given by $\dev_{\mathrm{can}}=\mathrm{incl}:\C^n\setminus\{0\}\hookrightarrow\C^n$ and $\rho_{\mathrm{can}}(\gamma)=h$.

\begin{proof}[Proof of Theorem \ref{main:existence}, primary case]
By Proposition \ref{p:normalizedmodel}, $M$ is biholomorphic to $(\C^n\setminus\{0\})/\langle h\rangle$. By Corollary \ref{c:properlydiscontinuous}, the group $\langle h\rangle\subseteq SR^*(L)\subseteq G$ acts freely and properly discontinuously on the open subset $\C^n\setminus\{0\}$ of $X=\C^n$, and $G$ acts transitively and holomorphically on $X$ (it contains the translations). By Proposition \ref{p:structure quotient}, the quotient inherits a holomorphic $(G,X)$-structure; its charts are local inverses of the covering map composed with the inclusion into $\C^n$, hence biholomorphisms for the complex structure of $M$: the structure is compatible. The secondary case is treated in Section \ref{s:secondary}.
\end{proof}

\section{Analytic rigidity of developing maps}\label{s:rigidity}

We assume $n\geq2$ throughout the remainder of the paper. The non-uniqueness announced in the introduction takes its simplest form in the following example, which we will use as a running illustration.

\begin{example}[the permutation family]\label{ex:permutations}
Let $h=L=\mathrm{diag}(\lambda_1,\dots,\lambda_n)$ with $0<|\lambda_1|<\dots<|\lambda_n|<1$, and $M=(\C^n\setminus\{0\})/\langle L\rangle$. For $\sigma\in\mathfrak{S}_n$, let $P_\sigma$ denote the associated permutation matrix. Then
\[
\dev\nolimits_\sigma:=P_\sigma|_{\C^n\setminus\{0\}},\qquad
\rho_\sigma(\gamma):=P_\sigma L P_\sigma^{-1}=\mathrm{diag}(\lambda_{\sigma^{-1}(1)},\dots,\lambda_{\sigma^{-1}(n)})
\]
is a representative of a compatible $(G,X)$-structure on $M$: indeed $\rho_\sigma(\gamma)$ is a diagonal matrix, hence belongs to $SR^*(L)$, the linear sub-resonance condition constraining only the positions of the nonzero coefficients and never their values. We will see (Proposition \ref{main:classification}) that these $n!$ structures are pairwise inequivalent, and even pairwise inequivalent modulo $\Aut(M)$ when $L$ is non-resonant.
\end{example}

The first structural result asserts that every developing map is the restriction of a global automorphism of $\C^n$.

\begin{proposition}\label{main:rigidity}
Let $n\geq 2$ and let $\Sc$ be a compatible $(G,X)$-structure on $M$, with representative $(\dev,\rho)$. Then:
\begin{enumerate}
\item $\dev$ extends uniquely to an entire map $\widehat{\dev}:\C^n\to\C^n$, which is a local biholomorphism at every point;
\item $p:=\widehat{\dev}(0)$ is the unique fixed point of $\rho(\gamma)$, and $g:=t_{-p}\circ\rho(\gamma)\circ t_{p}\in SR^*(L)$;
\item $\Psi:=t_{-p}\circ\widehat{\dev}$ is a holomorphic automorphism of $\C^n$, with $\Psi(0)=0$ and $\Psi\circ h\circ\Psi^{-1}=g$.
\end{enumerate}
In particular \textup{(}coarse uniqueness\textup{)}, any two compatible structures on $M$ have developing maps differing by post-composition with a global holomorphic automorphism of $\C^n$, which conjugates their holonomies.
\end{proposition}

\begin{proof}[Proof of Proposition \ref{main:rigidity}]
Let $(\dev,\rho)$ be a compatible structure: $\dev:\C^n\setminus\{0\}\to\C^n$ is a local biholomorphism and $\dev\circ h=\rho(\gamma)\circ\dev$.

(1) Extension. The components of $\dev$ are holomorphic on $\C^n\setminus\{0\}$ with $n\ge2$; by Hartogs' extension theorem they extend (uniquely) to $\C^n$. Both sides of the identity $\widehat{\dev}\circ h=\rho(\gamma)\circ\widehat{\dev}$ are holomorphic on $\C^n$ and coincide on the dense open set $\C^n\setminus\{0\}$; they therefore coincide everywhere.

Jacobian. Let $J:=\det D\widehat{\dev}\in\mathcal O(\C^n)$. Since $\dev$ is a local biholomorphism, $J$ does not vanish on $\C^n\setminus\{0\}$. Suppose $J(0)=0$: then $1/J$ is holomorphic on $B(0,1)\setminus\{0\}$ and extends holomorphically to $B(0,1)$ by Hartogs ($n\ge2$); the identity $J\cdot(1/J)=1$, valid on $B(0,1)\setminus\{0\}$, extends by continuity and contradicts $J(0)=0$. Hence $J(0)\neq0$ and $\widehat{\dev}$ is a local biholomorphism at every point of $\C^n$.

(2) Fixed point and recentering. Let $p:=\widehat{\dev}(0)$. Evaluating the equivariance at $0$ gives
\[
\rho(\gamma)(p)=\rho(\gamma)\big(\widehat{\dev}(0)\big)=\widehat{\dev}(h(0))=\widehat{\dev}(0)=p .
\]
By Lemma \ref{l:uniquedecomp}, $g:=t_{-p}\,\rho(\gamma)\,t_{p}\in G$ fixes $0$, so $g\in SR^*(L)$. Set $\Psi:=t_{-p}\circ\widehat{\dev}$; then $\Psi(0)=0$ and
\begin{equation}\label{eq:recentered}
\Psi\circ h=g\circ\Psi\qquad\text{on }\C^n .
\end{equation}
Differentiating \eqref{eq:recentered} at $0$: $A\,L=L_g\,A$, where $A:=D\Psi(0)$ is invertible by the Jacobian step and $L_g:=Dg(0)$; hence $L_g=A\,L\,A^{-1}$ is similar to $L$ and $\rsp(L_g)<1$. Lemma \ref{l:dynamics} applied to $g$ gives $\Fix(g)=\{0\}$, whence $\Fix(\rho(\gamma))=t_p(\Fix(g))=\{p\}$: the fixed point is unique.

(3) Bijectivity of $\Psi$. Note that $h$ and $g$ are automorphisms of $\C^n$ (Corollary \ref{c:autpol}), and that \eqref{eq:recentered} gives, for every $k\in\Z$,
\begin{equation}\label{eq:iterated}
\Psi\circ h^{k}=g^{k}\circ\Psi .
\end{equation}
Let $U_0$ be an open neighborhood of $0$ on which $\Psi$ is injective (local biholomorphism at $0$).

Injectivity. If $\Psi(x)=\Psi(y)$, then by \eqref{eq:iterated}, $\Psi(h^{p}x)=g^{p}\Psi(x)=g^{p}\Psi(y)=\Psi(h^{p}y)$ for every $p\ge0$. By Lemma \ref{l:dynamics} applied to $h$, we have $h^{p}x,h^{p}y\in U_0$ for $p$ large; the injectivity of $\Psi$ on $U_0$ gives $h^{p}x=h^{p}y$, hence $x=y$.

Surjectivity. $\Psi(U_0)$ is an open neighborhood of $0$. Let $w\in\C^n$; by Lemma \ref{l:dynamics} applied to $g$, $g^{p}(w)\in\Psi(U_0)$ for some $p\ge0$: $g^{p}(w)=\Psi(x)$, $x\in U_0$. Then, by \eqref{eq:iterated} with $k=-p$,
\[
w=g^{-p}(\Psi(x))=\Psi\big(h^{-p}(x)\big).
\]
$\Psi$ is therefore a biholomorphism of $\C^n$ onto $\C^n$.

For the last assertion (coarse uniqueness), let $(\dev_1,\rho_1)$ and $(\dev_2,\rho_2)$ be two compatible structures and $\dev_i=t_{p_i}\circ\Psi_i$ on $\C^n\setminus\{0\}$ the corresponding decompositions; set $\Theta:=t_{p_2}\circ\Psi_2\circ\Psi_1^{-1}\circ t_{-p_1}\in\Aut(\C^n)$. Then $\dev_2=\Theta\circ\dev_1$, and the relation between holonomies follows from $\rho_i(\gamma)=\widehat{\dev_i}\circ h\circ\widehat{\dev_i}^{\,-1}$.
\end{proof}

\begin{remark}
The coarse uniqueness statement is already a strong rigidity result: a priori, two developing maps could have differed by a wild, non-extendable local biholomorphism; in fact they differ by a global automorphism of the model. But this statement forgets the group $G$: the group $\Aut(\C^n)$ is huge and does not preserve $G$, so this notion of equivalence identifies structures that the geometry $(G,X)$ really distinguishes, such as all the structures of Example \ref{ex:permutations}. It says that the geometry of $M$ is unique up to isomorphism of geometries. The right notion of uniqueness of the structure must be finer; this is the object of the next section.
\end{remark}

\section{The holonomy flag and aligned structures}\label{s:flag}

\subsection{The model flag is intrinsic}

The following linear algebra lemma shows that the flag $V_\bullet$ is an intrinsic invariant of the geometry $(G,X)$: by Lemma \ref{l:uniquedecomp} and Corollary \ref{c:flag}, the differentials at $0$ of the elements of the isotropy group $\Stab_G(0)=SR^*(L)$ form exactly the parabolic subgroup $P(V_\bullet)$, and:

\begin{lemma}[parabolic lemma]\label{l:parabolic}
The subspaces of $\C^n$ invariant under all elements of $P(V_\bullet)$ are exactly $0,V_1,\dots,V_l$. Consequently, $V_\bullet$ is the unique flag of signature $d(L)$ invariant under $P(V_\bullet)$.
\end{lemma}

\begin{proof}
The $V_i$ are invariant by definition. Conversely, let $W\neq 0$ be a subspace invariant under every element of $P(V_\bullet)$.

Step 1: $W$ is invariant under every endomorphism $\varphi$ preserving the flag. Let $\varphi$ satisfy $\varphi(V_i)\subseteq V_i$ for all $i$. The polynomial $t\mapsto\det(\mathrm{id}+t\varphi)$ equals $1$ at $t=0$, so $\mathrm{id}+t\varphi$ is invertible for small $t\neq0$, and preserves each $V_i$ (with equality by dimension): $\mathrm{id}+t\varphi\in P(V_\bullet)$. Then $(\mathrm{id}+t\varphi)(W)\subseteq W$ and $\mathrm{id}(W)\subseteq W$ give $t\varphi(W)\subseteq W$, hence $\varphi(W)\subseteq W$.

Step 2. Let $w\in W\setminus\{0\}$ and let $i$ be minimal with $w\in V_i$ (so $w\notin V_{i-1}$). Let $v\in V_i$ be arbitrary. Since $w\notin V_{i-1}$, there is a linear form $\xi$ with $\xi(w)=1$ and $\xi|_{V_{i-1}}=0$. The endomorphism of rank $\le1$
\[
\varphi:=\xi(\cdot)\,v
\]
preserves the flag: for $k<i$, $\xi(V_k)\subseteq\xi(V_{i-1})=0$; for $k\ge i$, $\varphi(V_k)\subseteq\C v\subseteq V_i\subseteq V_k$. By Step 1, $v=\varphi(w)\in W$. Hence $V_i\subseteq W$.

Step 3. Let $i^{*}$ be the largest index $i$ such that there exists $w\in W$ with $w\in V_i\setminus V_{i-1}$. By Step 2, $V_{i^{*}}\subseteq W$; and by maximality $W\subseteq V_{i^{*}}$. So $W=V_{i^{*}}$.

Finally, an invariant flag consists of invariant subspaces, hence of some of the $V_i$'s; if it has signature $d(L)$, it is $V_\bullet$.
\end{proof}

\subsection{The holonomy flag and the alignment condition}

By Proposition \ref{main:rigidity}, any representative $(\dev,\rho)$ of a compatible structure $\Sc$ determines the following data: the unique fixed point $p$ of $\rho(\gamma)$, and the differential $D\rho(\gamma)(p)$, which is similar to $L$ (it equals $L_g=ALA^{-1}$ in the notation of the proof, since $\rho(\gamma)=t_p\,g\,t_{-p}$ and $D\rho(\gamma)(p)=Dg(0)=L_g$). Its eigenvalues therefore have moduli $\mu_1<\dots<\mu_l$ with the same multiplicities as those of $L$, and its modulus flag has signature $d(L)$. These data depend on the chosen representative, but the invariant we extract from them will not (Lemma \ref{l:aligned-invariant}).

\begin{definition}[holonomy flag; alignment]\label{d:aligned}
Given a representative $(\dev,\rho)$ of a compatible structure $\Sc$, with $p$ the unique fixed point of $\rho(\gamma)$, its holonomy flag is the modulus flag of $D\rho(\gamma)(p)$,
\[
\FF(\dev,\rho)\ :=\ \FF\big(D\rho(\gamma)(p)\big),\qquad \FF(\dev,\rho)_i=\bigoplus_{|\lambda|\le\mu_i}E^\lambda\big(D\rho(\gamma)(p)\big)\ \subset\ \C^n\simeq T_p\C^n .
\]
We say the structure $\Sc$ is aligned if some (equivalently, by Lemma \ref{l:aligned-invariant}, any) representative has holonomy flag equal to $V_\bullet$.
\end{definition}

\begin{lemma}[invariance]\label{l:aligned-invariant}
For representatives $(\dev,\rho)$ and $(\dev',\rho')$ in the same $G$-orbit, $\FF(\dev,\rho)=V_\bullet$ if and only if $\FF(\dev',\rho')=V_\bullet$. Hence alignment is a well-defined property of the structure $\Sc$. Moreover, the canonical structure is aligned.
\end{lemma}

\begin{proof}
It suffices to treat $(\dev',\rho')=(g_0\circ\dev,\,g_0\rho g_0^{-1})$ for $g_0\in G$. The fixed point of $\rho'(\gamma)=g_0\rho(\gamma)g_0^{-1}$ is $p'=g_0(p)$ and, by the chain rule,
\[
D\rho'(\gamma)(p')=Dg_0(p)\cdot D\rho(\gamma)(p)\cdot Dg_0(p)^{-1}.
\]
Conjugation by an invertible matrix $B$ transports characteristic spaces ($E^\lambda(BTB^{-1})=B\,E^\lambda(T)$), hence modulus flags: $\FF(\dev',\rho')=Dg_0(p)\cdot\FF(\dev,\rho)$. Now $Dg_0(p)$ preserves $V_\bullet$ by Lemma \ref{l:uniquedecomp}; thus $\FF(\dev,\rho)=V_\bullet$ if and only if $\FF(\dev',\rho')=Dg_0(p)(V_\bullet)=V_\bullet$.

For the canonical structure, the representative $(\mathrm{incl},\rho_{\mathrm{can}})$ has $p=0$, $\rho_{\mathrm{can}}(\gamma)=h$, $Dh(0)=L$ and $\FF(L)=V_\bullet$ by definition.
\end{proof}

\section{Uniqueness of the aligned structure}\label{s:uniqueness}

The technical core is the following rigidity lemma. It corrects the naive rigidity statement, which is falsewithout a hypothesis on $L_g$ (as the linear counterexample $\psi=P_\sigma$ already shows), by identifying the hypothesis that makes it true, namely the position of the flag.

We first record two elementary jet extraction identities.

\begin{lemma}[jet extraction]\label{l:jets}
Let $u=\sum_{j\ge1}u_j$ and $v=\sum_{m\ge1}v_m$ be holomorphic germs at $0$ fixing $0$, written in homogeneous components, and let $k\ge2$. Write $v^{(k-1)}:=\sum_{m\le k-1}v_m$ and $u^{(k-1)}:=\sum_{j\le k-1}u_j$ for the truncations, and $[\,\cdot\,]_k$ for the degree-$k$ homogeneous component. Then
\begin{align}
[u\circ v]_k&=u_1\circ v_k\ +\ \big[(u-u_1)\circ v^{(k-1)}\big]_k, \label{eq:jet1}\\
[u\circ v]_k&=u_k\circ v_1\ +\ \big[u^{(k-1)}\circ v\big]_k. \label{eq:jet2}
\end{align}
\end{lemma}

\begin{proof}
By polarization, $[u\circ v]_k=\sum_{j\ge1}\ \sum_{m_1+\dots+m_j=k}\widetilde u_j(v_{m_1},\dots,v_{m_j})$, where $\widetilde u_j$ is the symmetric $j$-multilinear form of $u_j$ and $m_i\ge1$. For \eqref{eq:jet1}: the $j=1$ term contributes $u_1\circ v_k$ in degree $k$; and for $j\ge2$, no $m_i$ can be $\ge k$ (otherwise the sum of the $j\ge2$ degrees would be $\ge k+1$), so $v$ may be replaced by $v^{(k-1)}$. For \eqref{eq:jet2}: the term $j=k$ only contributes through $\widetilde u_k(v_1,\dots,v_1)=u_k\circ v_1$ (as soon as one argument is some $v_m$ with $m\ge2$, the degree is $\ge k+1$); the terms $j>k$ have degree $>k$; the terms $j\le k-1$ constitute $[u^{(k-1)}\circ v]_k$.
\end{proof}

\begin{lemma}[aligned rigidity]\label{l:alignedrigidity}
Let $h,g\in SR^*(L)$ with $Dh(0)=L$ and $\FF(L_g)=V_\bullet$, where $L_g:=Dg(0)$. Let $\psi$ be a local biholomorphism at $0$ such that $\psi(0)=0$ and
\[
\psi\circ h=g\circ\psi\qquad\text{near }0 .
\]
Then $\psi\in SR^*(L)$; it is therefore polynomial of degree $\le c_0(L)$ and extends to an automorphism of $\C^n$.
\end{lemma}

\begin{proof}
Write $\psi=\sum_{k\ge1}\psi_k$ for the Taylor expansion in homogeneous components (convergent near $0$), $A:=\psi_1=D\psi(0)$, and similarly $h=L+\sum_{j\ge2}h_j$, $g=L_g+\sum_{j\ge2}g_j$, where $h_j,g_j\in SR_j(L)$ (the homogeneous components of an element of $SR(L)$ are sub-resonant, by definition).

Step 1: linear part. In degree $1$, the equivariance gives $A\,L=L_g\,A$ with $A$ invertible. Conjugation transports characteristic spaces: $A(E^\lambda(L))=E^\lambda(L_g)$ for every eigenvalue $\lambda$. Summing over $|\lambda|\le\mu_i$:
\[
A(V_i)=\bigoplus_{|\lambda|\le\mu_i}E^\lambda(L_g)=\FF(L_g)_i=V_i,
\]
by the alignment hypothesis. Hence $A\in P(V_\bullet)$, and by Corollary \ref{c:flag}, $A$ and $A^{-1}$ are linear sub-resonant maps: $\psi_1=A\in SR_1(L)$. Note also that $A L A^{-1}=L_g$ is equivalent to
\begin{equation}\label{eq:AL}
A^{-1}\,L_g\,A=L .
\end{equation}

Step 2: induction. We show by strong induction that $\psi_k\in SR_k(L)$ for every $k\ge1$; the case $k=1$ is Step 1. Let $k\ge2$ and suppose $\psi_m\in SR_m(L)$ for $m<k$; in particular the truncation $\psi^{(k-1)}=\sum_{m\le k-1}\psi_m$ belongs to $SR(L)$.

Let us extract the degree-$k$ component of $\psi\circ h=g\circ\psi$. By \eqref{eq:jet1} applied to $u=g$, $v=\psi$:
\[
[g\circ\psi]_k=L_g\circ\psi_k+A_k,\qquad A_k:=\big[(g-L_g)\circ\psi^{(k-1)}\big]_k .
\]
By \eqref{eq:jet2} applied to $u=\psi$, $v=h$:
\[
[\psi\circ h]_k=\psi_k\circ L+B_k,\qquad B_k:=\big[\psi^{(k-1)}\circ h\big]_k .
\]
Now $g-L_g\in SR(L)$, $h\in SR(L)$ and $\psi^{(k-1)}\in SR(L)$; by stability of $SR(L)$ under composition and extraction of homogeneous components (Remark \ref{r:noinvert}), $A_k,B_k\in SR_k(L)$. Equating the two extractions gives
\begin{equation}\label{eq:homological}
\psi_k\circ L\ -\ L_g\circ\psi_k\ =\ A_k-B_k\ \in\ SR_k(L).
\end{equation}
Set $\chi_k:=A^{-1}\circ\psi_k$. Using \eqref{eq:AL}:
\[
\psi_k\circ L-L_g\circ\psi_k
= A\circ\chi_k\circ L- (A L A^{-1})\circ A\circ\chi_k
= A\circ\big(\chi_k\circ L-L\circ\chi_k\big)
= A\circ M^{k}_{L}(\chi_k).
\]
Since $A^{-1}$ is linear sub-resonant, \eqref{eq:homological} gives
\[
M^{k}_{L}(\chi_k)=A^{-1}\circ(A_k-B_k)\in SR_k(L),
\]
and Lemma \ref{l:stable&ante}(2) implies $\chi_k\in SR_k(L)$. Finally $\psi_k=A\circ\chi_k\in SR_k(L)$ by stability, which closes the induction.

Step 3: conclusion. By Corollary \ref{c:degreebound}, $SR_k(L)=0$ for $k>c_0(L)$, so $\psi_k=0$ for $k>c_0(L)$: the Taylor series of $\psi$ is a polynomial, and the germ $\psi$ coincides with this polynomial. All its homogeneous components are sub-resonant and $D\psi(0)=A$ is invertible: $\psi\in SR^*(L)$. The last assertions follow from Corollaries \ref{c:degreebound} and \ref{c:autpol}.
\end{proof}

\begin{remark}\label{r:hypothesisneeded}
The hypothesis $\FF(L_g)=V_\bullet$ cannot be dropped: for $L=\mathrm{diag}(1/3,1/2)$, $g:=\mathrm{diag}(1/2,1/3)\in SR^*(L)$ and $\psi(z_1,z_2)=(z_2,z_1)$ satisfy $\psi\circ L=g\circ\psi$, but $\psi\notin SR^*(L)$ (it does not preserve $V_1=\C e_1$). It is precisely Step 1 that fails: $A$ intertwines $L$ and $L_g$, but has no reason to preserve $V_\bullet$ if the flag of $L_g$ is not the standard one.
\end{remark}

\begin{proposition}\label{main:aligned}
Let $n\geq 2$. There is exactly one compatible aligned structure on $M$, the canonical structure. Any of its representatives has developing map equal, up to translation, to the restriction of an element of $SR^*(L)$; in particular it is polynomial of degree at most $c_0(L)$.
\end{proposition}

\begin{proof}[Proof of Proposition \ref{main:aligned}]
Let $\Sc$ be an aligned structure, and choose a representative $(\dev,\rho)$. Adopt the notation of the proof of Proposition \ref{main:rigidity}: $p$, $g=t_{-p}\rho(\gamma)t_p\in SR^*(L)$ and $\Psi=t_{-p}\circ\widehat{\dev}\in\Aut(\C^n)$, with $\Psi(0)=0$ and $\Psi\circ h=g\circ\Psi$. Since $D\rho(\gamma)(p)=Dg(0)=L_g$, alignment means $\FF(L_g)=V_\bullet$. The germ of $\Psi$ at $0$ then satisfies all the hypotheses of Lemma \ref{l:alignedrigidity}: there exists $s\in SR^*(L)$ whose germ at $0$ coincides with that of $\Psi$; by analytic continuation, $\Psi=s$ on $\C^n$.

Set $g_0:=t_p\circ s\in G$. Then $\dev=\widehat{\dev}|_{\C^n\setminus\{0\}}=g_0\circ\dev_{\mathrm{can}}$ and
\[
\rho(\gamma)=\widehat{\dev}\circ h\circ\widehat{\dev}^{\,-1}=g_0\circ h\circ g_0^{-1}=g_0\,\rho_{\mathrm{can}}(\gamma)\,g_0^{-1} :
\]
the chosen representative $(\dev,\rho)$ of $\Sc$ is $G$-equivalent to the representative $(\dev_{\mathrm{can}},\rho_{\mathrm{can}})$ of the canonical structure, so $\Sc=\Sc_{\mathrm{can}}$. Since the canonical structure is itself aligned (Lemma \ref{l:aligned-invariant}), it is the unique aligned structure. The description of the developing map follows from $\dev=g_0\circ\dev_{\mathrm{can}}$ with $g_0=t_p\circ s$.
\end{proof}

\section{Classification and optimality}\label{s:classification}

\subsection{Classification of compatible structures}

Set
\[
\Norm(h)=\big\{\Psi\in\Aut(\C^n)\ :\ \Psi(0)=0,\ \ \Psi\, h\,\Psi^{-1}\in SR^*(L)\big\}.
\]
The group $SR^*(L)$ acts on $\Norm(h)$ by left composition: if $s\in SR^*(L)$ and $\Psi\in\Norm(h)$, then $(s\Psi)h(s\Psi)^{-1}=s(\Psi h\Psi^{-1})s^{-1}\in SR^*(L)$. For $\Psi\in\Norm(h)$, we write $\Sc_\Psi$ for the structure represented by the pair $\big(\Psi|_{\C^n\setminus\{0\}},\ \rho_\Psi\big)$, where $\rho_\Psi(\gamma):=\Psi h\Psi^{-1}$.

\begin{proposition}\label{main:classification}
Let $n\geq 2$.
\begin{enumerate}
\item The assignment $\Psi\mapsto\Sc_\Psi$ induces a bijection
\[
SR^*(L)\backslash\Norm(h)\ \xrightarrow{\ \sim\ }\ \big\{\text{compatible $(G,X)$-structures on }M\big\}.
\]
\item The aligned structure corresponds to the class of the identity in $SR^*(L)\backslash\Norm(h)$.
\item If $h=L=\mathrm{diag}(\lambda_1,\dots,\lambda_n)$ with pairwise distinct moduli, then $P_\sigma\in\Norm(h)$ for every $\sigma\in\mathfrak{S}_n$ and the classes $[P_\sigma]$ are pairwise distinct; exactly one of them, $\sigma=\mathrm{id}$, is aligned. If moreover $L$ is non-resonant, these $n!$ structures are pairwise inequivalent modulo $\Aut(M)$ \textup{(}Proposition \ref{p:autM}\textup{)}.
\end{enumerate}
\end{proposition}

\begin{proof}[Proof of Proposition \ref{main:classification}]
(1) The map is well defined. For $\Psi\in\Norm(h)$, $\Psi|_{\C^n\setminus\{0\}}$ is a local biholomorphism and $\rho_\Psi(\gamma)\in SR^*(L)\subset G$, with the equivariance $\Psi\circ h=\rho_\Psi(\gamma)\circ\Psi$: $\Sc_\Psi$ is a compatible structure.

Surjectivity. This is Proposition \ref{main:rigidity}: any structure has a representative $(\dev,\rho)$ with $\dev=t_p\circ\Psi|_{\C^n\setminus\{0\}}$ for some $\Psi\in\Norm(h)$, and this structure equals $\Sc_\Psi$, since $(\dev,\rho)$ and the defining representative of $\Sc_\Psi$ are related by $g_0:=t_p\in G$ (the relation between the holonomies follows as in the proof of Proposition \ref{main:aligned}).

Injectivity / compatibility with the classes. Let $\Psi_1,\Psi_2\in\Norm(h)$. Suppose $\Sc_{\Psi_1}=\Sc_{\Psi_2}$, i.e.\ their defining representatives are $G$-equivalent: there exists $g_0\in G$ with $\Psi_2=g_0\circ\Psi_1$ on $\C^n\setminus\{0\}$, hence on $\C^n$ by continuity. Evaluating at $0$: $0=\Psi_2(0)=g_0(\Psi_1(0))=g_0(0)$, so $g_0\in\Stab_G(0)=SR^*(L)$ (Lemma \ref{l:uniquedecomp}) and $[\Psi_1]=[\Psi_2]$ in $SR^*(L)\backslash\Norm(h)$. Conversely, if $\Psi_2=s\circ\Psi_1$ with $s\in SR^*(L)\subset G$, then $g_0:=s$ shows the defining representatives are $G$-equivalent, since $\rho_{\Psi_2}(\gamma)=s\,\rho_{\Psi_1}(\gamma)\,s^{-1}$ by direct computation; hence $\Sc_{\Psi_1}=\Sc_{\Psi_2}$.

(2) If $\Psi=s\in SR^*(L)$, then $\Sc_\Psi=\Sc_{\mathrm{can}}$ (their representatives are related by $g_0=s$), hence $\Sc_\Psi$ is aligned. Conversely, if $\Sc_\Psi$ is aligned, Proposition \ref{main:aligned} gives $\Sc_\Psi=\Sc_{\mathrm{can}}=\Sc_{\mathrm{id}}$, so $[\Psi]=[\mathrm{id}]$ by (1).

(3) $g_\sigma:=P_\sigma L P_\sigma^{-1}=\mathrm{diag}(\lambda_{\sigma^{-1}(1)},\dots,\lambda_{\sigma^{-1}(n)})$ is a diagonal matrix, hence preserves the coordinate flag $V_\bullet$ (here $V_i=\mathrm{Vect}(e_1,\dots,e_i)$, the moduli being pairwise distinct); by Corollary \ref{c:flag}, $g_\sigma\in SR^*(L)$ and $P_\sigma\in\Norm(h)$. If $[P_\sigma]=[P_{\sigma'}]$, then $s:=P_{\sigma'}P_\sigma^{-1}=P_{\sigma'\sigma^{-1}}\in SR^*(L)$; being linear, $s$ preserves each $V_i=\mathrm{Vect}(e_1,\dots,e_i)$ (Corollary \ref{c:flag}), which for a permutation matrix forces $\sigma'\sigma^{-1}(\{1,\dots,i\})=\{1,\dots,i\}$ for every $i$, hence $\sigma'=\sigma$. Finally, the defining representative of $\Sc_{P_\sigma}$ has holonomy flag $\FF(g_\sigma)=P_\sigma(V_\bullet)$, which equals $V_\bullet$ if and only if $\sigma=\mathrm{id}$; so $\Sc_{P_\sigma}$ is aligned exactly when $\sigma=\mathrm{id}$. The statement modulo $\Aut(M)$ is Proposition \ref{p:autM} below.
\end{proof}

\subsection{Persistence of non-uniqueness modulo $\Aut(M)$}

The following statement shows that the defect of uniqueness cannot be absorbed by the automorphisms of the manifold: two distinct compatible structures may remain distinct even after precomposition by an automorphism of $M$.

We say that two compatible structures $\Sc,\Sc'$ are equivalent modulo $\Aut(M)$ if there exists $f\in\Aut(M)$ with $f^{*}\Sc=\Sc'$ as structures; in terms of representatives, if there exist a lift $\widetilde f\in\Aut(\C^n\setminus\{0\})$ of $f$ and $g_0\in G$ with $\dev'=g_0\circ\dev\circ\widetilde f$, for some representatives $(\dev,\rho)$ of $\Sc$ and $(\dev',\rho')$ of $\Sc'$.

\begin{proposition}\label{p:autM}
Let $h=L=\mathrm{diag}(\lambda_1,\dots,\lambda_n)$, $0<|\lambda_1|<\dots<|\lambda_n|<1$, be non-resonant: $\lambda^{I}\neq\lambda_j$ for every multi-index $I$ with $|I|\ge2$ and every $j$. Then:
\begin{enumerate}
\item every lift to $\C^n\setminus\{0\}$ of an automorphism of $M$ is the restriction of an invertible diagonal linear map;
\item the structures $\Sc_{P_\sigma}$, $\sigma\in\mathfrak S_n$, are pairwise inequivalent modulo $\Aut(M)$.
\end{enumerate}
\end{proposition}

\begin{proof}
(1) Let $f\in\Aut(M)$ and let $\widetilde f$ be a lift to the universal cover $\C^n\setminus\{0\}$. Compatibility with the deck group reads $\widetilde f\circ L\circ\widetilde f^{-1}=L^{\eps}$ for some $\eps\in\{\pm1\}$ (conjugation by $\widetilde f$ is an automorphism of $\langle L\rangle\simeq\Z$, hence sends the generator to $L^{\pm1}$). By Hartogs ($n\ge2$), $\widetilde f$ extends to a holomorphic map $\widehat f:\C^n\to\C^n$, and the relation $\widehat f\circ L=L^{\eps}\circ\widehat f$ extends by analytic continuation. Then $\widehat f(0)$ is a fixed point of $L^{\eps}$; since $\Fix(L)=\Fix(L^{-1})=\{0\}$, we get $\widehat f(0)=0$. The Jacobian argument of the proof of Proposition \ref{main:rigidity} (applied verbatim to $\widehat f$, which is a local biholomorphism on $\C^n\setminus\{0\}$) shows that $B:=D\widehat f(0)$ is invertible; differentiating at $0$: $B\,L=L^{\eps}B$, so $L$ and $L^{\eps}$ are similar. If $\eps=-1$, the eigenvalues of $L^{-1}$ have modulus $>1$: impossible. So $\eps=1$ and $\widehat f$ commutes with $L$.

Expand $\widehat f=\sum_{k\ge1}\widehat f_k$ (no constant term since $\widehat f(0)=0$); commutation gives, in degree $k$, $\widehat f_k\circ L=L\circ\widehat f_k$. In the monomial basis $H_{I,j}$, this equation reads $(\lambda^{I}-\lambda_j)\,c_{I,j}=0$ for each coefficient $c_{I,j}$ of $\widehat f_k$. For $k=|I|\ge2$, non-resonance gives $c_{I,j}=0$: $\widehat f=B$ is linear. Finally $BL=LB$ with $L$ diagonal with pairwise distinct eigenvalues (their moduli are distinct) forces $B$ diagonal.

(2) Suppose $\Sc_{P_{\sigma'}}$ is equivalent to $\Sc_{P_\sigma}$ modulo $\Aut(M)$: there exist a lift $\widetilde f$ and $g_0\in G$ with $P_{\sigma'}=g_0\circ P_\sigma\circ\widetilde f$ on $\C^n\setminus\{0\}$, hence on $\C^n$. By (1), $\widetilde f=D|_{\C^n\setminus\{0\}}$ with $D$ diagonal invertible. Evaluating at $0$: $0=g_0(P_\sigma D\,(0))=g_0(0)$, so $g_0\in SR^*(L)$ (Lemma \ref{l:uniquedecomp}), and
\[
g_0=P_{\sigma'}\,(P_\sigma D)^{-1}=P_{\sigma'}\,D^{-1}P_\sigma^{-1}=P_{\sigma'\sigma^{-1}}\cdot\big(P_\sigma D^{-1}P_\sigma^{-1}\big),
\]
the product of a permutation matrix and a diagonal matrix: it is a monomial matrix with underlying permutation $\sigma'\sigma^{-1}$. Being linear and in $SR^*(L)$, $g_0$ preserves each $V_i=\mathrm{Vect}(e_1,\dots,e_i)$ (Corollary \ref{c:flag}), which forces $\sigma'\sigma^{-1}=\mathrm{id}$, i.e.\ $\sigma=\sigma'$.
\end{proof}

\section{Secondary Hopf manifolds}\label{s:secondary}

So far $M=W_h=(\C^n\setminus\{0\})/\langle h\rangle$ has been a primary Hopf manifold. We now show that the construction extends, without modification of the model $(G,X)$, to every secondary Hopf manifold, and we isolate the algebraic reason why.

Let $X$ be a secondary Hopf manifold of dimension $n\geq2$. By definition its universal cover is $\C^n\setminus\{0\}$, and $X=(\C^n\setminus\{0\})/\Gamma$ with $\Gamma=\pi_1(X)\subset\Aut(\C^n\setminus\{0\})$ acting freely and properly discontinuously; by the theorem of Kodaira in dimension two and Hasegawa \cite{Hasegawa} in general, $\Gamma$ contains a normal infinite cyclic subgroup $\langle\gamma_0\rangle$ of finite index generated by a contraction. Applying the Poincar\'e--Dulac normalization (Theorem \ref{t:PDSR}) to $\gamma_0$, that is, replacing $\Gamma$ by a conjugate under a biholomorphism of $\C^n\setminus\{0\}$ fixing the puncture, we may assume that $\gamma_0=h\in SR^*(L)$ is in normal form, with $D h(0)=L$. Thus $\langle h\rangle\lhd\Gamma$ is normal of finite index, and $F:=\Gamma/\langle h\rangle$ is the finite group of the covering $W_h\to X$.

We first record the rigidity of the elements of $\Gamma$, extending to non-contracting elements the arguments already used for $h$ and for $\Aut(M)$ (Proposition \ref{p:autM}). The following lemma, and more generally the structure of $\Gamma$ as a central extension of a finite group by $\langle h\rangle$, is presumably well known; we include a proof for completeness and to keep the paper self-contained.

\begin{lemma}\label{l:secondary-lift}
Every $g\in\Gamma$ extends to a polynomial automorphism of $\C^n$ fixing $0$, and satisfies $g\circ h=h\circ g$. In other words, $h$ is central in $\Gamma$.
\end{lemma}

\begin{proof}
Let $g\in\Gamma$. Its components are holomorphic on $\C^n\setminus\{0\}$ with $n\geq2$, so by Hartogs' theorem $g$ extends to a holomorphic map $\widehat g:\C^n\to\C^n$; the same applies to $g^{-1}$, and the relations $\widehat g\circ\widehat{g^{-1}}=\widehat{g^{-1}}\circ\widehat g=\mathrm{id}$, valid on the dense open set $\C^n\setminus\{0\}$, extend by continuity, so $\widehat g\in\Aut(\C^n)$.

Since $\langle h\rangle$ is normal in $\Gamma$, there is $\eps\in\{\pm1\}$ with $g\, h\, g^{-1}=h^{\eps}$ (the only automorphisms of $\Z$ being $\pm1$); this identity of automorphisms of $\C^n\setminus\{0\}$ extends to $\C^n$. Evaluating at $0$ gives $\widehat g(0)=\widehat g(h(0))=h^{\eps}(\widehat g(0))$, so $\widehat g(0)$ is a fixed point of $h^{\eps}$. As in the proof of Proposition \ref{main:rigidity}, the Jacobian of $\widehat g$ cannot vanish at $0$ (otherwise $1/\det D\widehat g$ would extend across $0$ by Hartogs, contradicting $\det D\widehat g\not\equiv0$), so $\widehat g$ is a local biholomorphism at $0$; since $\Fix(h)=\Fix(h^{-1})=\{0\}$ by Lemma \ref{l:dynamics}, we get $\widehat g(0)=0$. Differentiating $g\, h\, g^{-1}=h^{\eps}$ at $0$ yields $B\,L\,B^{-1}=L^{\eps}$ with $B:=D\widehat g(0)$ invertible; hence $L$ and $L^{\eps}$ are similar and have the same spectrum. If $\eps=-1$, the eigenvalues of $L^{-1}$ have modulus $>1$ while those of $L$ have modulus $<1$: impossible. Therefore $\eps=+1$, that is $g\,h\,g^{-1}=h$.
\end{proof}

\begin{proposition}\label{main:secondary}
Let $X$ be a secondary Hopf manifold of dimension $n\geq 2$, written $X=(\C^n\setminus\{0\})/\Gamma$ with $\Gamma\subset\Aut(\C^n\setminus\{0\})$ containing a normal cyclic contraction subgroup $\langle h\rangle$ of finite index, $h$ a Poincar\'e--Dulac normal form. Then $\Gamma\subset SR^*(L)\subset G$; consequently the inclusion $\C^n\setminus\{0\}\hookrightarrow\C^n$ is a developing map for a compatible $(G,X)$-structure on $X$, with holonomy the inclusion $\Gamma\hookrightarrow G$. Together with the primary case, this completes the proof of Theorem \ref{main:existence}.
\end{proposition}

\begin{proof}[Proof of Proposition \ref{main:secondary}]
Let $g\in\Gamma$. By Lemma \ref{l:secondary-lift}, $g$ is a biholomorphism of $\C^n$ fixing $0$ with $g\circ h=h\circ g$. Apply the aligned rigidity lemma (Lemma \ref{l:alignedrigidity}) with the following choice of data: the r\^ole of ``$h$'' is played by $h$, the r\^ole of ``$g$'' is also played by $h$, and the r\^ole of ``$\psi$'' by $g$. The hypotheses hold: $h\in SR^*(L)$ with $Dh(0)=L$; the flag condition $\FF(L_g)=V_\bullet$ becomes $\FF(L)=V_\bullet$, which is true by definition of $V_\bullet$; $g$ is a local biholomorphism at $0$ fixing $0$; and the intertwining relation ``$\psi\circ h=g\circ\psi$'' becomes $g\circ h=h\circ g$, which holds. The lemma yields $g\in SR^*(L)$.

Since $g\in\Gamma$ was arbitrary, $\Gamma\subset SR^*(L)\subset G$. The group $\Gamma$ acts freely and properly discontinuously on the open subset $\C^n\setminus\{0\}$ of $X_{\mathrm{model}}=\C^n$, on which $G$ acts transitively and holomorphically; by Proposition \ref{p:structure quotient}, the quotient $X=(\C^n\setminus\{0\})/\Gamma$ inherits a holomorphic $(G,X)$-structure. Its charts are local inverses of the covering map followed by the inclusion $\C^n\setminus\{0\}\hookrightarrow\C^n$, hence biholomorphisms for the complex structure of $X$: the structure is compatible, with developing map the inclusion $\C^n\setminus\{0\}\hookrightarrow\C^n$ and holonomy the inclusion $\Gamma\hookrightarrow SR^*(L)\subset G$. This settles the secondary case, and with the primary case of Section \ref{s:dynamics} completes the proof of Theorem \ref{main:existence}.
\end{proof}

\begin{remark}
The proof shows that the decisive fact is the centrality of $h$ in $\Gamma$, not merely its normality: it is centrality that turns the normalization relation into the commutation $g\circ h=h\circ g$, to which aligned rigidity applies with the flag condition automatically satisfied. In concrete terms, $D\widehat g(0)$ commutes with $L$, hence preserves every characteristic subspace $E^\lambda(L)$ and therefore every $V_i=\bigoplus_{|\lambda|\le\mu_i}E^\lambda(L)$: the modulus flag $V_\bullet$ is preserved by the whole deck group, with no genericity or non-resonance assumption. This is why no secondary quotient can obstruct the descent of the structure, and why, in contrast with the failure of marked uniqueness on primary manifolds, there is here no combinatorial room for a counterexample: the permutation phenomenon of Example \ref{ex:permutations} arises from developing maps that do not commute with $h$, whereas deck transformations necessarily do.
\end{remark}

\begin{remark}
The uniqueness results of Sections \ref{s:rigidity}--\ref{s:classification} also descend, in the following sense. A compatible structure on $X$ pulls back to a $\Gamma$-invariant, hence a fortiori $\langle h\rangle$-invariant, compatible structure on the primary cover $W_h$, i.e.\ to a compatible structure on $W_h$ whose holonomy representation extends from $\langle h\rangle$ to $\Gamma$. Thus the compatible structures on $X$ correspond to the $F$-equivariant compatible structures on $W_h$, a subfamily of the classification of Proposition \ref{main:classification}; the aligned structure, being canonical, is $F$-equivariant and descends to the distinguished compatible structure on $X$ produced by Proposition \ref{main:secondary}.
\end{remark}

\section{Comparison with the work of Madera}\label{s:madera}
The recent work of Madera \cite{Madera} addresses a closely related question by a converse route, and it is worth spelling out where the two approaches meet and where they diverge. Madera constructs, on every primary Hopf manifold, holomorphic $G$-structures of higher order in the sense of Kobayashi (principal subbundles of the frame bundle $R^k(M)$) and shows that they induce flat holomorphic Cartan geometries. His structure group $G^r_\beta$, the group of sub-resonant polynomial transformations of maximal degree $r$ attached to the eigenvalue data $\beta$, coincides with the group $SR^*(L)$ used here, and the Lie group $\widetilde{G^r_\beta}$ generated by $G^r_\beta$ together with the translations of $\C^n$ is exactly our model group $G$. His Lemma 7.1.1, stating that $\theta_{-P(z)}\circ P\circ\theta_z\in G^r_\beta$, is the same computation as the one underlying our Lemma \ref{l:Gform}.

The methods, however, run in opposite directions, and this is the substance of the comparison. Madera works without the Poincar\'e--Dulac theorem: starting from a contraction in arbitrary form, he builds an order-one structure by a cohomological argument on Mall bundles (the pullback operator $\gamma^*$ on bounded holomorphic forms is compact, and Riesz--Schauder applies), extends it order by order using the vanishing of certain first cohomology groups, and proves integrability through a Frobenius-type theorem of Benoist. From the resulting integrable structure he then derives the Poincar\'e--Dulac normal form, as the change of coordinates furnished by an integrability chart. Our route is the reverse: we take the Poincar\'e--Dulac normalization as the starting point (giving it, in Sections \ref{s:sr}--\ref{s:pd}, a self-contained treatment through the sub-resonant operators $M^k_L$), and read off the $(G,X)$-structure directly from the normal form. Thus Madera's flat Cartan geometry and our locally homogeneous $(G,X)$-structure are two faces of the same underlying object: an integrable $G^r_\beta$-structure of order $r$ and a $(\widetilde{G^r_\beta},\C^n)$-structure carry the same information, and each of the two papers can be seen as recovering the input of the other as an output. In particular, on the existence question the two results imply one another, once the dictionary $G^r_\beta=SR^*(L)$, $\widetilde{G^r_\beta}=G$ is in place; the developing map of our structure is precisely the integrability chart of his.

Beyond this common core, the two papers pursue different aims, and neither subsumes the other. Madera's emphasis is on the order-by-order construction and on the Cartan-geometric and Gromov-rigidity framework, including a careful analysis of Hopf surfaces, where a cohomological obstruction forces the structure to be extended only up to order $r+2$. Our emphasis is on the developing-map description and on uniqueness: the notion of aligned structure, the classification of compatible structures by the coset space $SR^*(L)\backslash\Norm(h)$, and the failure of marked uniqueness (the family of Example \ref{ex:permutations}) have no counterpart in \cite{Madera}. Conversely, the higher-order and Cartan-geometric refinements, and the Mall-bundle proof of existence bypassing normal forms, are specific to \cite{Madera}. Finally, Proposition \ref{main:secondary} extends the existence statement to secondary Hopf manifolds, which lie outside the scope of \cite{Madera} (its standing definition requires an infinite cyclic fundamental group).

\newpage

\end{document}